\DeclareMathOperator{\attr}{Attr}
\newcommand{\bidir}{%
  \twoheadleftarrow
  \mathrel{\mkern-15mu}%
  \twoheadrightarrow
}
\newcommand{\ced}{\mathcal{Y}}
\newcommand{\htap}{\twoheadleftarrow}
\newcommand{\ed}{\mathcal{E}}
\DeclareMathOperator{\End}{{End}}
\newcommand{\ext}{\mathrm{P}}
\newcommand{\Id}{\mathrm{Id}}
\newcommand{\id}{\mathrm{id}}
\newcommand{\fcal}{\mathcal{F}}
\newcommand{\loops}{\mathcal{L}}
\newcommand{\ned}{\mathcal{N}}
\newcommand{\path}{\twoheadrightarrow}
\newcommand{\paths}{\mathcal{P}}
\newcommand{\rel}{\mathbf{r} }
\newcommand{\relc}{\mathbf{c}}
\newcommand{\res}{\mathrm{Res}}
\newcommand{\rs}[1]{\mathcal{H}(\Sigma_{#1})}
\newcommand{\zed}{\mathcal{Z}}
\newcommand{\zigzag}{\rightsquigarrow}
\newcommand{\N}{\mathbb{N}}
\newcommand{\cat}{\mathcal{C}}
\newcommand{\hecke}{\mathfrak{H}}
\newcommand{\sym}{\Sigma}
\newtheorem{theorem}{Theorem}[section]
\newtheorem{proposition}[theorem]{Proposition}
\newtheorem{corollary}[theorem]{Corollary}
\theoremstyle{definition}
\newtheorem{definition}[theorem]{Definition}
\theoremstyle{remark}
\newtheorem{remark}[theorem]{Remark}
\newtheorem{example}[theorem]{Example}
\newcommand{\si}[1]{\scriptsize{\emph{#1}}}
\DeclareRobustCommand{\authorthing}{
\begin{center}
	\begin{tabular*}{0.75\textwidth}{@{\extracolsep{\fill}}|lc}
		 \quad  Ivan Yudin \thanks{This work was supported by the Centre for Mathematics of the 
University of Coimbra -- UID/MAT/00324/2013, funded by the Portuguese 
Government through FCT/MEC and co-funded by the European Regional 
Development Fund through the Partnership Agreement PT2020, and 
		 by  Programa
Investigador FCT IF/00016/2013 funded by Portuguese state budget.}\\	
 \si{\quad yudin@mat.uc.pt}\\
{ \si{CMUC, Department of Mathematics, University of Coimbra,
Coimbra, Portugal} }\\
	\end{tabular*}
\end{center}
}
\date{}
\author{\authorthing}
\title{Decreasing diagrams and coherent presentations}
\begin{document}
\maketitle

\begin{abstract}
We show how decreasing diagrams introduced in the theory of rewriting systems
can be used to prove coherence type theorems in category theory. We apply
this method to describe a coherent presentation of the $0$-Hecke monoid
$\hecke(\sym_n)$ of the symmetric
group $\sym_n$, i.e. a presentation by generators, relations, and relations
between relations.
\end{abstract}

\section{Introduction}

Presentations of monoids are known to be essential in the study of actions on sets.
 In fact, given a monoid $M$ with
presentation $\left\langle X,\rel \right\rangle$, to describe an action of $M$ on a set
$S$ 
  it is enough to give a set of endomorphisms $A_x\colon S\to S$, $x\in X$,
satisfying the
relations in $\rel$.

In recent years the interest for actions of monoids on categories raised. 
In this case presentations have to be replaced by the so called \emph{coherent
presentations}. 
%
%
%
Suppose we are given 
 an action of $M$ on  $\cat$. Then we have a collection
of endofunctors $F_m\colon \cat \to \cat$, $m\in M$, for which  $F_m \circ F_n = F_{mn}$ does not hold in general. Instead, we have a collection of natural
isomorphisms $\lambda_{m,n}\colon F_m \circ F_n \to F_{mn}$ such that the
natural transformations
\begin{equation}
\label{equal}
\begin{gathered}
F_{l}\circ F_{m} \circ F_n \xrightarrow{\lambda_{l,m}\circ F_n} F_{lm} \circ F_n
\xrightarrow{\lambda_{lk,n}}F_{lmn}\\
F_{l}\circ F_{m} \circ F_n \xrightarrow{F_l\circ \lambda_{m,n}} F_{l} \circ
F_{mn} \xrightarrow{\lambda_{l,kn}}F_{lmn}
\end{gathered}
\end{equation}
are equal for all $l$, $m$, $n\in M$. Further, using
$\left\{\,\lambda_{m,n}\,\middle|\, m,n\in M\,\right\}$, we can construct
natural isomorphisms
\begin{equation*}
\lambda_{m_1,\dots, m_k} \colon F_{m_1}\circ \dots \circ F_{m_k} \to
F_{m_1\dots m_k}
\end{equation*}
for all $m_1$,\dots, $m_k\in M$. 
 Now for  every relation
\begin{equation*}
r = (x_{1} \dots x_{s} , y_{1}\dots y_{t})
\end{equation*}
in $\rel$ we define the natural isomorphism $\tau_r = \lambda_{y_1,\dots,
y_t}^{-1} \lambda_{x_1,\dots,x_s}$. It can be shown that one can reconstruct
(up
to an isomorphism) the action 
\begin{equation*}
\left\{\, F_m,\ \lambda_{m,n} \,\middle|\, m,n\in
M \right\} 
\end{equation*}
of $M$ on $\cat$ from the collection $\left\{\, F_x,\ \tau_r \,\middle|\,
x\in x,\ \tau_r \right\}$. 

Not every collection $\left\{\, F_x,\ \tau_r \,\middle|\, x\in X,\ r\in \rel
\right\}$ of endofunctors of $\cat$ and natural transformations between them can
be obtained from an action of $M$ on $\cat$. The obstruction comes from the
axiom~\eqref{equal}. This obstruction can be translated into a set $\ed$  of equations
of the form
\begin{equation*}
\tau_{r_1} \dots \tau_{r_k} = \tau_{s_1}\dots \tau_{s_l}
\end{equation*}
with $r_1$, \dots $r_k$, $s_1$,\dots, $s_l\in \rel$. Such a  set $\ed$ is called
a complete
set of relations between relations from $\rel$, and $\left\langle X,\rel,\ed
\right\rangle$ is called a
\emph{coherent presentation} of $M$. 

%

To find coherent presentations of some monoids,
Guiraud and Malbos~\cite{plactic} used higher dimensional rewriting theory.
Applying the same technique, Gausent, Guiraud, and Malbos obtained a coherent
presentation of Artin braid groups associated with Coxeter
systems~\cite{braid}.

In this article, we use  a new approach to determine coherent presentations of
monoids. Namely, we use the notion of decreasing
diagrams, that was introduced by van Oostrom in~\cite{oostrom} to obtain a
sufficient condition for a  locally confluent abstract reduction system to 
be confluent. In~\cite{klop} Klop, van Oostrom, and de Vrijer gave a
geometrical proof of the result of van Oostrom. In their proof they discovered
that if an abstract reduction system admits enough 
decreasing elementary diagrams, then every reduction diagram can be patched by 
these elementary diagrams. Their result allows us to give a sufficient
condition for $\left\langle X,\rel, \ed \right\rangle$ to be a coherent
presentation of a monoid $\left\langle X,\rel \right\rangle$ (see
Theorem~\ref{main:presentation}). 
Using 
this sufficient condition we establish a coherent presentation of the $0$-Hecke
monoid. This presentation was used in the joint work  of the
author with A. P.
Santana~\cite{0hecke} to exhibit an action of the  \mbox{$0$-Hecke} monoid on the category of rational
modules for the quantum Borel group. 
Note that our coherent presentation contains the  coherent presentation of the
braid group obtained by Guiraud 
\emph{et al.} in~\cite{braid}. 

The paper is organised as follows. In Section~\ref{ARS} we collect results on
abstract reduction systems used throughout  the paper. In
Section~\ref{cat} we establish  a relationship between abstract reduction systems
and categories, and show how decreasing diagrams can be used to deduce
the commutativity of an infinite set of diagrams, from the commutativity of a given,
often finite, set of diagrams (Theorem~\ref{main:theory}). Section~\ref{action}
contains the  definition of
an action of a monoid on a category (following~\cite{deligne}). 
In Section~\ref{monoids}, we apply Theorem~\ref{main:theory} to an abstract
reduction system associated to a presentation of a monoid. The main result of
Section~\ref{coherent} is Theorem~\ref{main:presentation}, that gives a
sufficient condition for $\left\langle X,\rel,\ed \right\rangle$ to be a coherent presentation. In Section~\ref{hecke} we
describe a coherent presentation of the $0$-Hecke monoid $\rs{n+1}$ of the
symmetric group $\Sigma_{n+1}$.

\section{Abstract reduction system}
\label{ARS}

An \emph{abstract reduction system} is a set $A$ with a relation $R\subset
A\times A$ on $A$ which is called a set of \emph{rewriting rules}. The elements
of $R$ will be sometimes depicted by $a\to b$ for $\left( a,b \right)\in R$. 
The sequence of elements $a_0$, \dots, $a_k$ is called  a \emph{reduction path}
from $a_0$ to $a_k$ if 
$(a_{i-1},a_{i})\in R$ for all $1\le i\le k $. 
If there is a reduction path from $a\in A$ to $b\in A$, we write $a\path b$.

A \emph{reduction diagram} for $\left\langle A,R \right\rangle$ is an oriented planar graph $\Gamma$, such that:
\begin{enumerate}[(1)]
		\setlength{\itemsep}{-2pt}
	\item  All the arrows of $\Gamma$ go either from left to right
		or from top to bottom.  
		\item Some arrows of $\Gamma$ are solid and some arrows of
			$\Gamma$ are dashed. 
	\item  The nodes of $\Gamma$ are labeled by elements of $A$. 
		The label of a node $x$ will be denoted by $l\left( x
		\right)\in A$.
\item If the nodes with labels $a$ and $b$ are connected by a solid arrow then
		${(a,b)\in R}$.
	\item If two nodes are connected by a dashed arrow then they have 
		equal labels. 
	\item  \label{tricky}If from a node $x\in \Gamma$
		there is a horizontal arrow to $y\in \Gamma$ and a vertical arrow to
		$z\in \Gamma$ then one of the two mutually exclusive
possibilities holds
		\begin{enumerate}[(a)]
			\item There is no vertex which is simultaneously  strictly bellow and
strictly to the right of $x$.  		 In this case
		we say that $x$ is an \emph{open corner}. 
	\item There is a node $w\in
		\Gamma$, a vertical path from $y$ to $w$ in $\Gamma$ and a
		horizontal path from $z$ to $w$ in $\Gamma$. These paths are
		called \emph{convergence} paths. 
		If $x$ and $y$ are connected by a dashed arrow, then $z$ and
		$w$ are connected by a dashed arrow as well, and the path from
		$y$ to $w$ contains just one arrow. Similarly, if $x$ and
		$z$ are connected by a dashed arrow then $y$ and $w$ are
		connected by a dashed arrow and the path from $z$ to $w$
		contains just one arrow. 
		\end{enumerate}
		  \end{enumerate}
Bellow is an example of a reduction diagram
\begin{equation*}
\xymatrix{a_{11} \ar[r] \ar[dd] & a_{12} \ar[r] \ar[d] & a_{13} \ar[rr] \ar[d] &
& a_{15} \ar[d] \\ & a_{22} \ar@{-->}[r] \ar[d]  & a_{22} \ar[r] \ar[d] &
 a_{24}\ar[r] & a_{25}\\
a_{31} \ar[r] \ar@{-->}[d] & a_{32} \ar@{-->}[r] \ar@{-->}[d] & a_{32} \\ a_{31}
\ar[r] &
a_{32} &&&& .}
\end{equation*}
In the above reduction diagram there are two open corners with labels
$a_{22}$ and $a_{32}$, whose (matrix) coordinates are $(2,3)$ and $(3,2)$,
respectively. 
The following graph does not satisfy the  axioms of a reduction diagram
\begin{equation*}
\xymatrix{
a_{11} \ar[r] \ar[d] & a_{12} \ar[dd] \\
a_{21} \ar[d] \\
a_{32} \ar[r] & a_{33}\, .
} 
\end{equation*}
In fact, the top left corner of the above graph is neither an open corner nor
there are convergence paths for the horizontal arrow $a_{11}\to a_{12}$ and the
vertical arrow $a_{11}\to a_{21} $.

  \begin{definition}
		  A reduction diagram is called \emph{complete} if it does not
	  		  contain any open corners. \end{definition}
An example of a complete diagram is given by 
$$
\xymatrix{a_{11} \ar[r] \ar[dd] & a_{12} \ar[r] \ar[d] & a_{13} \ar[rr] \ar[d] &
& a_{15} \ar[d] \\ & a_{22} \ar@{-->}[r] \ar[d]  & a_{22} \ar[r] \ar[d] &
a_{24}\ar[r] \ar@{-->}[d]& a_{25}\ar@{-->}[d]\\
a_{31} \ar[r] \ar[d] & a_{32} \ar@{-->}[r] \ar@{-->}[d] &
a_{32}\ar[r]\ar@{-->}[d] &
a_{24} \ar[r] \ar@{-->}[d] & a_{25} \ar@{-->}[d]  \\ a_{41} \ar[r] &
a_{32}\ar@{-->}[r] &a_{32}\ar[r] & a_{24}\ar[r] & a_{25} }
$$
		  \begin{definition}
			  An \emph{elementary diagram} (e.d.) for $\left\langle A,R
			  \right\rangle$ is a reduction diagram $\Gamma$ such
			  that the edges of $\Gamma$ constitute the boundary of a
			  rectangular. 
		  \end{definition}
		  Suppose $\Gamma$ is an e.d. Then the top side and the left 
		  side of $\Gamma$ contain just one arrow each, as otherwise the top-left
		  corner of $\Gamma$ would not satisfy the axiom \eqref{tricky}
for a reduction diagram.
Therefore there
		  are four different types of e.d.s:
		  $$
		  \xymatrix{a \ar[r] \ar[d] & b \ar@{->>}[d] \\ c \ar@{->>}[r] &
		  d}\ \ \ \  
		  \xymatrix{a \ar@{-->}[r] \ar[d] & a \ar[d] \\ c \ar@{-->}[r] &
		  c}\ \ \ \ 
		  \xymatrix{a \ar[r] \ar@{-->}[d] & b \ar@{-->}[d] \\ a \ar[r] &
		  b}\ \ \ \  
		  \xymatrix{a \ar@{-->}[r] \ar@{-->}[d] & a \ar@{-->}[d] \\ a \ar@{-->}[r] &
		  a} 
		  $$
		  where two headed arrows are used as an abbreviation of
		  a path. 
		  The e.d.s of first type are called \emph{proper} and the
		  rest of e.d.s are called \emph{improper}. 

		  Let $\Gamma$ be a non-complete reduction diagram for
		 $\left\langle A,R \right\rangle$ and $x$ is an open corner in
		 $\Gamma$ with the horizontal arrow to $y$ and  the vertical arrow to
		 $z$.  Suppose that we have an e.d. $\Gamma'$  with the labels $l\left( x
		 \right)$, $l\left( y \right)$, $l\left( z \right)$ at the top
		 left, top right, and bottom left corners, respectively. Then we
		 can glue (suitably stretched) $\Gamma'$ into $\Gamma$ identifying the top left 
		 corner of $\Gamma'$ with $x$, the top right  corner of $\Gamma'$
		 with $y$, and the bottom left  corner of $\Gamma'$ with $z$.  
		 This process is called \emph{adjoining} of the  e.d. $\Gamma'$ to
		 $\Gamma$ at $x$.
It is obvious, that the resulting diagram is again a reduction diagram. 

		 \begin{definition}
			 We say that $\left\langle A,R \right\rangle$ is 
			 \emph{locally confluent} if for any $a\to b$,
			 $a\to c$ there is an e.d. with labels $a$, $b$, $c$ at the
			 top left, top right, and bottom left corners,
			 respectively.
		 \end{definition}
		 
\begin{definition}
	A reduction diagram $\Gamma$ for $\left\langle A,R \right\rangle$ is
	called \emph{initial} if its edges constitute the top and the left sides of a
	rectangular. 
\end{definition}

Suppose that $\left\langle A,R \right\rangle$ is a locally confluent ARS. Let
$\ed$ be a family of e.d.s such that  for every ordered pair $a\to b$, $a\to c$ in $R$
there is an e.d. $E\in \ed$ whose top arrow is $a\to b$ and left arrow
is $a\to c$.  In this case we say that $\ed$ is a complete set of e.d.s. 
 It is
proved in Section~4 of \cite{klop} that the recursive process of adjoining of 
e.d.s from a complete set of e.d.s   to any initial finite diagram $\Gamma$ results in a  complete
diagram $\Gamma'$ in at most  a countable number of   steps.

Recall that a \emph{preorder}  is a reflexive and transitive binary
relation.
Suppose now that the set $R$ is equipped with a preorder $\succeq$. 
We write $r_1 \succ r_2$ if $r_1 \succeq r_2$ but not $r_2 \succeq r_1$. If
$r_1 \succeq r_2$  and $r_2\succeq r_1$ simultaneously, then we write $r_1 \sim r_2$. It is
immediate that  $\sim$ is an equivalence relation on $R$. 
\begin{definition}
We say that the e.d.  
\begin{equation*}
\xymatrix{
x \ar[rrr]^{u} \ar[ddd]_{l} &&& y_1 \ar[d]^{r_1} \\
&  && y_1 \ar@{.}[d] \\
&&& y_m \ar[d]^{r_m} \\
z_1 \ar[r]^{d_1} & z_2 \ar@{.}[r] & z_n \ar[r]^{d_n} & w
}
\end{equation*}
is \emph{decreasing} if the following two condition hold
\begin{multicols}{2}
\begin{enumerate}[1)]
\item there is $0\le j \le n$ such that 
\begin{enumerate}[i)]
\item $u \sim d_j$ in the case $j\not= 0$;
\item $l \succ d_k$ for all $k<j$;
\item $l \succ d_k$ or $u \succ d_k$ for all $k >j$;
\end{enumerate}
\item there is $0\le s \le m$ such that 
\begin{enumerate}[i)]
\item $l \sim r_s$ in the case  $s\not=0$;
\item $ u \succ r_t$ for all $t<s$;
\item $u \succ r_t$ or $l \succ r_t$ for all $t>s$.
\end{enumerate}
\end{enumerate}
\end{multicols}
More informally we require that either  the reduction path $r_1$, \dots, $r_m$ consists
of the steps that are strictly less than $l$ or $u$, or if it starts with the rules
that are strictly less than $u$, then there is a step $r_t$ which is equivalent
to $l$ and all other steps are strictly less than $u$ or $l$. 

Similarly the reduction path $d_1$, \dots, $d_n$ either consists
of the steps that are strictly less than $l$ or $u$, or if it starts with the rules
that are strictly less than $l$, then there is a step $d_t$ which is equivalent
to $u$ and all other steps are strictly less than $u$ or $l$. 
\end{definition}
We say that the preorder $\succeq$ is \emph{well-founded} if for every  sequence
\begin{equation*}
r_1 \succeq r_2 \succeq \dots \succeq r_m \succeq \dots
\end{equation*}
of elements in $R$ there is an integer  $n$ such that for all  $N\ge n$ we have $r_N \sim
r_n$; in other words, any decreasing sequence in $(R,\succeq)$ stabilizes. 

The following theorem is a reformulation of \cite[Proposition~15]{klop}. 
\begin{theorem}\label{geometric}
Suppose $\left\langle A,R \right\rangle$ is a locally confluent ARS and there is
a well-founded preorder $\succeq$ on $R$ and a complete set $\ed$ of decreasing
e.d.s.   Then every process of adjoining chosen
e.d.s to any initial finite diagram $\Gamma$ results in a complete
diagram $\Gamma'$ in a finite number of steps. 
\end{theorem}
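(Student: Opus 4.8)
The plan is to reduce the statement to a well\nobreakdash-foundedness argument. By the result recalled just above from Section~4 of \cite{klop}, any process of adjoining e.d.s from $\ed$ to $\Gamma$ is well defined and produces a sequence
\[
\Gamma = \Gamma_0,\ \Gamma_1,\ \Gamma_2,\ \dots
\]
of finite reduction diagrams in which $\Gamma_{i+1}$ is obtained from $\Gamma_i$ by adjoining a single e.d.\ of $\ed$ at an open corner of $\Gamma_i$, and which, if it is finite, ends at a diagram with no open corners, that is, a complete one. So it suffices to construct a map $\mu$ from finite reduction diagrams into a well-founded partially ordered set such that $\mu(\Gamma') \prec \mu(\Gamma)$ whenever $\Gamma'$ is obtained from $\Gamma$ by adjoining a decreasing e.d.; since every e.d.\ of $\ed$ is decreasing, the sequence above is then strictly descending, hence finite, no matter which choices are made along the way.

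I would build the target poset from $(R/{\sim},\succ)$, the set of $\sim$-classes of rules with the strict order induced by $\succeq$, which is well-founded because $\succeq$ is; recall that the multiset extension of a well-founded order is again well-founded, and iterate this construction. The measure $\mu(\Gamma)$ should be a multiset with one entry per open corner of $\Gamma$: to an open corner with outgoing horizontal arrow labelled by the rule $u$ and outgoing vertical arrow labelled by the rule $l$ one attaches a ``weight'' recording not merely the classes $[u]$ and $[l]$, but also the classes of the rules already lying along the two directions in which that corner will eventually be resolved. Concretely this is van Oostrom's measure of a conversion \cite{oostrom}, in which every step carries, besides its own label, a record of the labels that dominate it, transported to the planar setting of \cite{klop}; for a complete diagram $\mu$ is the empty multiset, the least element.

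The heart of the matter is the strict decrease of $\mu$, and this is the step I expect to require real work. Fix the open corner $x$ at which a decreasing e.d.\ $E$ is adjoined, with right convergence path $y \xrightarrow{r_1}\cdots\xrightarrow{r_m}w$ and bottom convergence path $z \xrightarrow{d_1}\cdots\xrightarrow{d_n}w$. First I would check, by inspecting the definition of adjoining, that $x$ is no longer an open corner (the paths of $E$ now join $y$ and $z$ at the common node $w$), that no other open corner of $\Gamma$ is affected, and that new open corners can appear only at $y$ and at $z$, where the first steps $r_1$ and $d_1$ of the convergence paths meet arrows already present in $\Gamma$. Next I would invoke condition~1) of the definition of a decreasing e.d.\ to get $d_k \prec l$ or $d_k \prec u$ for every $k$, with at most one exception, an index $j$ with $d_j \sim u$, and condition~2) symmetrically to get $r_t \prec u$ or $r_t \prec l$ for every $t$ except for at most one index $s$ with $r_s \sim l$. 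The delicate point — the reason the naive multiset of labels along the frontier will not decrease — is that re-creating a lone step equivalent to $u$ on the bottom path, or to $l$ on the right path, is harmless only because every other step on that path is \emph{strictly} below $u$ or $l$, which is exactly what the second and third clauses of conditions~1) and~2) assert; the weights must be designed so that this ``shielding by a dominating step'' turns into a genuine multiset decrease. Verifying, with the correct weights, that replacing the weight of $x$ by the weights of the new corners at $y$ and $z$ strictly lowers $\mu$ is the main obstacle; it is precisely the computation of \cite[Proposition~15]{klop}, and it is there that all the hypotheses are used. Granting it, well-foundedness of the iterated multiset order finishes the proof.
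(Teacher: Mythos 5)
Your proposal is correct and follows essentially the same route as the paper, which offers no independent argument but simply presents the theorem as a reformulation of \cite[Proposition~15]{klop}: the measure-on-open-corners/multiset-order termination argument you outline, with the strict decrease guaranteed by the ``shielding'' clauses of the decreasingness condition, is exactly the content of that cited proposition, and you correctly identify it as the point where all the hypotheses are used.
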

We will say that the sequence of elements
\begin{equation*}
a_0, a_1, \dots, a_k
\end{equation*}
is a zigzag in $\left\langle A,R \right\rangle$ from $a_0$ to $a_k$, if for ever $1\le j\le k$, we have
$(a_{j-1},a_j)\in R$ or $(a_j, a_{j-1}) \in R$. We will denote zigzags by $a_0 \zigzag
a_k$. 
Let $\left\langle A,R \right\rangle$ be an ARS, $\succ$ a well-founded preorder
on $R$,  and $\ed$ a complete set of decreasing e.d.s. 
Suppose we are given a reduction diagram of the form
\begin{equation}
\label{zigzag}
\xymatrix{
&& a_1 \ar@{->>}[r] \ar@{->>}[d] & b_0 \\
&& b_1 \ar@{.}[ld]\\
a_k \ar@{->>}[r] \ar@{->>}[d] & b_{k-1}\\
b_k
}
\end{equation}
Then applying Theorem~\ref{geometric} to the diagrams
\begin{equation*}
\xymatrix{
a_j \ar@{->>}[r] \ar@{->>}[d] & b_{j-1} \\
b_j  , 
}
\end{equation*}
we can get in a finite number of steps the diagram
\begin{equation}
\label{zigzag2}
\xymatrix{
&&& a_1 \ar@{->>}[r] \ar@{->>}[d] & b_0 \ar@{->>}[d]  \\
&&& b_1 \ar@{.}[ld] \ar@{->>}[r] & c_0\\
& a_{k-2} \ar@{->>}[r] \ar@{->>}[d] & b_{k-2} \ar@{->>}[d] \\
a_k \ar@{->>}[r] \ar@{->>}[d] & b_{k-1} \ar@{->>}[d] \ar@{->>}[r] & c_{k-2} \\
b_k \ar@{->>}[r]  & c_{k-1} ,
}
\end{equation}
where every square is in fact tiled into an elementary diagrams from $\ed$. Note
that the new diagram has $k-1$ open corners, that is one open corner less than~\eqref{zigzag}.  Now
we can apply Theorem~\ref{geometric} to the diagrams
\begin{equation*}
\xymatrix{
b_j \ar@{->>}[r] \ar@{->>}[d] & c_{j-1} \\
c_j&  . 
}
\end{equation*}
As a result we get a new reduction diagram with $k-2$ open corners. Continuing,
we get a reduction diagram
\begin{equation}
\label{completezz}
\xymatrix{
&& a_1 \ar@{->>}[r] \ar@{->>}[d] & b_0  \ar@{->>}[ddd]\\
&& b_1 \ar@{.}[ld]\\
a_k \ar@{->>}[r] \ar@{->>}[d] & b_{k-1}\\
b_k \ar@{->>}[rrr] &&& z, 
}
\end{equation}
whose interior is tiled by e.d.s from $\ed$. 
Thus we get
\begin{corollary}
\label{cor:zigzag}
Suppose $\left\langle A,R \right\rangle$
is an ARS, $\succeq$ a well-founded preorder on $R$, and $\ed$ a complete set of
decreasing e.d.s. Then any reduction diagram of the form \eqref{zigzag} can be
completed to a diagram of the form \eqref{completezz} in a finite number of steps. 
\end{corollary}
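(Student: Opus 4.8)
The plan is to run an induction on the number $k$ of open corners of the reduction diagram~\eqref{zigzag}, turning the informal discussion that precedes the statement into a formal argument. The only non-trivial geometric input is already at hand: by Theorem~\ref{geometric}, applied to an initial finite diagram with a single open corner, every process of adjoining e.d.s from $\ed$ terminates after finitely many steps in a complete diagram whose interior is tiled by members of $\ed$.

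For the base case $k=1$ the diagram~\eqref{zigzag} is itself an initial finite diagram, so Theorem~\ref{geometric} produces a complete diagram of the shape~\eqref{completezz} directly. For the inductive step I would assume the statement for all diagrams of this form with at most $k-1$ open corners and take $\Gamma$ of the form~\eqref{zigzag} with open corners $a_1,\dots,a_k$. For each $j$, the horizontal path $a_j\path b_{j-1}$ together with the vertical path $a_j\path b_j$ is an initial finite diagram, which Theorem~\ref{geometric} completes, in finitely many steps using only e.d.s from $\ed$, to a rectangle whose new bottom and right sides end at a common node $c_{j-1}$. Adjoining all $k$ of these completions to $\Gamma$ gives the diagram~\eqref{zigzag2}: the corners $a_1,\dots,a_k$ are no longer open, the nodes $b_1,\dots,b_{k-1}$ become open corners (each now sits between two of the newly completed rectangles), while $b_0$ and $b_k$ do not. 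Thus~\eqref{zigzag2} is again a reduction diagram of the form~\eqref{zigzag}, now with exactly $k-1$ open corners, and the region already tiled by $\ed$ lies inside it. Applying the induction hypothesis to~\eqref{zigzag2} completes it, in finitely many further steps, to a diagram of the form~\eqref{completezz} that contains the previously tiled region; this is the desired completion of~$\Gamma$. Since the whole process consists of $k$ passes, each being finitely many adjunctions of e.d.s, it terminates.

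The step I expect to be the main obstacle is purely combinatorial: checking that adjoining the $k$ completed rectangles again yields a legitimate reduction diagram of the staircase shape~\eqref{zigzag} and that the number of open corners drops by exactly one — that the only new open corners are $b_1,\dots,b_{k-1}$, that the completions at neighbouring corners $a_j$, $a_{j+1}$ fit together compatibly, and that the improper and dashed e.d.s produced along the way still respect axiom~\eqref{tricky}. Everything else reduces to a direct appeal to Theorem~\ref{geometric}.
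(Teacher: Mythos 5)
Your proposal is correct and follows essentially the same route as the paper: the paper's argument is exactly the iterative process you describe, namely applying Theorem~\ref{geometric} to each L-shaped corner at $a_j$ to pass from~\eqref{zigzag} to~\eqref{zigzag2} with one fewer open corner, and repeating until the staircase is fully tiled. The combinatorial bookkeeping you flag (that the new open corners are precisely $b_1,\dots,b_{k-1}$ and the adjoined rectangles fit together) is likewise left implicit in the paper, so there is no substantive difference between the two arguments.
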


One of the applications of Proposition~\ref{geometric} is to prove
\emph{confluency} of
an ARS. Similarly, Corollary~\ref{cor:zigzag} can be used to prove the Church-Rosser
property of $\left\langle A,R \right\rangle$. In this paper we will apply
Proposition~\ref{geometric} and Corollary~\ref{cor:zigzag} to prove
commutativity of certain diagrams. It should be noted that
Proposition~\ref{geometric} enlightens the long time observed connection between
confluent ARSs and coherence results in category theory.

\section{ARSs and categories}
\label{cat}
We will start by introducing notion that generalises normal forms for
terminating ARS to the case when $\path$ is a well-founded but not necessarily
terminating relation on $A$.
\begin{definition}
We say that $a\in A$ is  \emph{semi-normal} if for every path $a\path b$ in
$\left\langle A,R \right\rangle$ there is a path $b\path a$ in $\left\langle
A,R \right\rangle$. If $c\in A$ and there is a path $c\path a$ to a semi-normal
element $a$, we say that $a$ is a \emph{semi-normal form} of $c$.  
\end{definition}

We will denote
by $\bidir$ the equivalence relation on $A$ defined by
\begin{equation*}
a \bidir b \Leftrightarrow a \path b\,\, \&\,\, b \path a.
\end{equation*}

Suppose the preorder $\path$ on $A$ is well-founded. Then for every element
of $A$ there is a semi-normal form. 

Suppose $\left\langle A,R \right\rangle$ is a confluent system and $b_1$, $b_2$ are two semi-normal forms of $a\in A$. Since
$\left\langle A,R \right\rangle$ is confluent, there is $c\in A$ and two paths
$b_1 \path c$, $b_2 \path c$. As $b_2$ is semi-normal there is  a path
$c\path b_2$. Therefore we get the path $b_1\path c \path b_2$. Since also $b_1$ is
semi-normal,   there is a path $b_2\path b_1$. Hence $b_1 \bidir
b_2$. 
Thus we get
\begin{proposition}
\label{seminormal}
Suppose $\left\langle A,R \right\rangle$ is a confluent ARS, and $\path$ is a
well-founded preorder on $A$. Then for every element $a\in A$ there is a
semi-normal form $b$ which is unique up to equivalence with respect to $\bidir$. 
\end{proposition}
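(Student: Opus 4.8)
The plan is to establish the two assertions of Proposition~\ref{seminormal} separately: existence of a semi-normal form for every $a\in A$, and uniqueness up to $\bidir$. The uniqueness part is already carried out verbatim in the paragraph preceding the statement, so the only genuine work is the existence claim, which relies solely on well-foundedness of $\path$ and not on confluence. I would therefore begin the proof by disposing of existence, and then simply refer back to the argument given above for uniqueness.

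For existence, the idea is a straightforward well-founded-recursion / minimal-counterexample argument. Fix $a\in A$. If $a$ is itself semi-normal we are done (it is a semi-normal form of itself). Otherwise, there is a path $a\path b$ with no return path $b\path a$; in particular $b$ lies strictly below $a$ in the preorder $\path$, i.e. $a\succ b$ in the notation of the excerpt (we have $a\path b$ but not $b\path a$). Now recurse on $b$. To turn this into a clean argument I would assume, for contradiction, that the set $S=\{\,a\in A : a$ has no semi-normal form$\,\}$ is nonempty. Since $\path$ is well-founded, $S$ contains an element $a$ that is minimal with respect to the strict part of $\path$ among elements of $S$ — more precisely, an element $a\in S$ such that there is no $b\in S$ with $a\path b$ and not $b\path a$. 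Such $a$ cannot be semi-normal (a semi-normal element is its own semi-normal form), so pick $a\path b$ with no path $b\path a$. Then $b\notin S$ by minimality, so $b$ has a semi-normal form $c$, i.e. $b\path c$ with $c$ semi-normal. Composing, $a\path b\path c$, so $c$ is a semi-normal form of $a$, contradicting $a\in S$. Hence $S=\emptyset$.

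One subtlety worth pinning down is that the "minimal element of $S$" step uses well-foundedness of the strict relation derived from the preorder $\path$ exactly in the form stated in the excerpt: every $\path$-descending chain stabilises up to $\bidir$. If $S$ were nonempty with no such minimal element, one could build an infinite chain $a_0\path a_1\path a_2\path\cdots$ with all $a_i\in S$ and $a_{i}\not\path a_{i+1}$ reversed — i.e. a strictly descending chain that does not stabilise, contradicting well-foundedness. I expect this to be the only place requiring care; everything else is transitivity of $\path$ and unwinding definitions. I do not anticipate a serious obstacle, since the statement is essentially a packaging of well-founded recursion together with the already-given confluence argument for uniqueness; the main thing is to state the well-foundedness application precisely enough that it is visibly legitimate.

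Finally, for uniqueness I would write: suppose $b_1$, $b_2$ are both semi-normal forms of $a$, so there are paths $a\path b_1$ and $a\path b_2$. As noted in the discussion preceding the proposition, confluence gives $c$ with $b_1\path c$ and $b_2\path c$; semi-normality of $b_2$ gives $c\path b_2$, hence $b_1\path b_2$; semi-normality of $b_1$ then gives $b_2\path b_1$; thus $b_1\bidir b_2$. This completes the proof.
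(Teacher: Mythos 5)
Your proof is correct and follows essentially the same route as the paper: the uniqueness argument via confluence is verbatim the paper's, and the existence claim is exactly the step the paper asserts (``Suppose the preorder $\path$ on $A$ is well-founded. Then for every element of $A$ there is a semi-normal form'') without writing out the well-founded minimal-counterexample argument that you supply. Your explicit treatment of that step, including the reduction of ``minimal element exists'' to the stabilising-chain form of well-foundedness, is a legitimate filling-in of a detail the paper omits rather than a different approach.
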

\begin{remark}
Proposition~\ref{seminormal} is a generalization of the well-known fact that
every element in a  confluent terminating ARS has a unique normal form. 
\end{remark}
We will call the set $\attr(a)$ of all semi-normal forms of $a\in A$ an
\emph{attractor} of $a$. 
We also will denote by $\attr(A)$ the set of all semi-normal elements in
$A$. 
Since $a\in \attr(A)$ and $(a,b)\in R$ imply that $b\in \attr(A)$, we see that
the relation $R$ can be restricted to $\attr(A)$. We will denote the resulting
relation on $\attr(A)$ by $\attr(R)$. We will sometimes denote the ARS
$\left\langle \attr(A), \attr(R) \right\rangle$ by $\attr(\left\langle A,R
\right\rangle)$.

Now, let $\cat$ be a category and $\left\langle A,R \right\rangle$ an ARS. Both $\left\langle A,R \right\rangle$ and $\cat$ can
be considered as graphs. Suppose $f\colon \left\langle A,R \right\rangle\to
\cat$ is  a map of graphs. 
 Then using the composition of morphisms in $\cat$, we can extend $f$ to the paths 
$a\path b$
  in $\left\langle A,R \right\rangle$. In particular, given an empty path
$a\dashrightarrow a$, we set $f(a\dashrightarrow a) = 1_{f(a)}$. 
Our aim is to find sufficient conditions on $f$ that guarantee that for any two
paths $p$, $q\colon a\path b$ one gets $f(p) = f(q)$. 
\begin{theorem}\label{main:theory}
Let $\left\langle A,R \right\rangle$ be an ARS, $\succeq$ be a well-founded preorder on $R$ and $\ed$ a complete set of
decreasing e.d.s.  Suppose that
\begin{enumerate}[i)]
\item for every $E\in \ed$ the diagram $f(E)$ is commutative;
\item for every $b\in \attr(A)$ and every path $p\colon b\path b$ the map $f(p)$
is equal to~$1_{f(b)}$. 
\end{enumerate}
Then for any two paths $p$, $q\colon a\path b$ with $b\in \attr(a)$, we get $f(p) = f(q)$. 
\end{theorem}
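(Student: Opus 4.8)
The plan is to reduce the general statement about arbitrary paths $p,q\colon a\path b$ to the special case of zigzag diagrams already handled by Corollary~\ref{cor:zigzag}, and then to exploit condition~(ii) to kill the contribution of any reduction path that lands in the attractor. First I would observe that since $b\in\attr(a)$, the element $b$ is semi-normal, so both $p$ and $q$ are reduction paths ending at a semi-normal element; in particular, given any path $b\path c$ we also have a path $c\path b$. The key reformulation: two paths $p,q\colon a\path b$ assemble into a zigzag $a\zigzag a$ (going forward along $p$ and backward along $q$), but more usefully they fit into the side of a reduction diagram of the shape~\eqref{zigzag}. Concretely, I want to build a reduction diagram whose left side is $p$ (a vertical path $a\path b$) and whose top side is $q$ (a horizontal path $a\path b$), i.e. an initial diagram with both boundary paths ending at the \emph{same} node $b$.

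The main technical step is then to apply Theorem~\ref{geometric} (equivalently, the tiling procedure leading to Corollary~\ref{cor:zigzag}) to complete this initial diagram. Completing it tiles the interior by elementary diagrams $E\in\ed$, and produces a completed diagram whose new outer boundary consists of: the original top path $q$ followed by a reduction path $b\path z$ along the right side, and the original left path $p$ followed by a reduction path $b\path z$ along the bottom side — for a common terminal node $z$. Call these two new reduction paths $r\colon b\path z$ (right/bottom extensions). Because every interior tile is an $E\in\ed$ and $f(E)$ is commutative by~(i), applying $f$ and composing commutative squares shows that $f$ of the top-then-right boundary equals $f$ of the left-then-bottom boundary; that is,
\begin{equation*}
f(r_{\mathrm{right}})\circ f(q) \;=\; f(r_{\mathrm{bottom}})\circ f(p).
\end{equation*}
Here I would need to be a little careful about orientation/precomposition conventions when pasting squares, but this is exactly the standard "every commutative square tiling gives a commutative outer boundary" argument, done by induction on the number of tiles using the fact that adjoining an e.d. attaches one more commutative square along an edge.

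It remains to dispose of the extra paths $r_{\mathrm{right}}$ and $r_{\mathrm{bottom}}$. Both are reduction paths starting at $b$; since $b$ is semi-normal and $b\path z$, we have a path $z\path b$, so each of $r_{\mathrm{right}}$ and $r_{\mathrm{bottom}}$ can be completed by a path $z\path b$ to a loop $b\path b$. But wait — to conclude $f(r_{\mathrm{right}})=1$ directly from~(ii) I would want $r_{\mathrm{right}}$ itself to be a loop at $b$, which it need not be if $z\neq b$. The clean fix is to run the completion so that $z$ is forced to be semi-normal (it is: $z\in\attr(A)$ since $b\in\attr(A)$ and $z$ is reachable from $b$, using that $\attr(A)$ is closed under $R$), pick one path $\sigma\colon z\path b$, and note that $\sigma\circ r_{\mathrm{right}}\colon b\path b$ and $\sigma\circ r_{\mathrm{bottom}}\colon b\path b$ are loops at the semi-normal element $b$, so by~(ii) $f(\sigma)\circ f(r_{\mathrm{right}}) = 1_{f(b)} = f(\sigma)\circ f(r_{\mathrm{bottom}})$. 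Also $\sigma\circ\sigma'$ where $\sigma'\colon b\path z$ is \emph{any} path (e.g. $r_{\mathrm{right}}$) composed the other way — actually the slickest route: $r_{\mathrm{right}}$ followed by $\sigma$ is a loop at $b$, so $f(\sigma)f(r_{\mathrm{right}})=1$; likewise $\sigma$ followed by $r_{\mathrm{right}}$... no, that's a loop at $z$, and $z$ is semi-normal too, so~(ii) applies there as well giving $f(r_{\mathrm{right}})f(\sigma)=1_{f(z)}$. Hence $f(r_{\mathrm{right}})$ is invertible with inverse $f(\sigma)$, and the same for $f(r_{\mathrm{bottom}})$ with the same inverse $f(\sigma)$; therefore $f(r_{\mathrm{right}}) = f(\sigma)^{-1} = f(r_{\mathrm{bottom}})$. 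Cancelling this common invertible factor from
$f(r_{\mathrm{right}})\circ f(q) = f(r_{\mathrm{bottom}})\circ f(p)$
yields $f(q)=f(p)$, as desired.

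The step I expect to be the main obstacle is the very first one: justifying that two arbitrary paths $p,q\colon a\path b$ can be realized as the left and top sides of an \emph{initial} reduction diagram to which Theorem~\ref{geometric} applies. An initial diagram in the sense defined requires its edges to form the top and left sides of a rectangle, with solid/dashed arrows satisfying all the axioms — one must check that a plain sequence of $R$-steps can be laid out this way (this is essentially automatic, placing $q$ horizontally and $p$ vertically, with all arrows solid), and then that the completion process of Theorem~\ref{geometric} indeed terminates and produces the boundary behaviour described above (this is where one invokes that $\ed$ is a complete set of decreasing e.d.s and $\succeq$ is well-founded). Everything after that — pasting commutative squares, and the invertibility argument using semi-normality and condition~(ii) — is routine.
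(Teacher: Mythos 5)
Your proposal is correct and follows essentially the same route as the paper: complete the initial diagram formed by $p$ and $q$ using Theorem~\ref{geometric}, read off $f(p')\circ f(p)=f(q')\circ f(q)$ from the tiling by commutative diagrams $f(E)$, and then use semi-normality of $b$ together with condition~(ii) to dispose of the extra boundary paths. The only (immaterial) difference is the last step: the paper simply composes with $f(t)$ for one return path $t\colon c\path b$ and uses $f(tp')=1_{f(b)}=f(tq')$ to compute $f(p)=f(t)f(p')f(p)=f(t)f(q')f(q)=f(q)$ directly, whereas you take the slightly longer detour of showing that $f$ of both boundary extensions is invertible with common inverse $f(\sigma)$ (invoking condition~(ii) at the semi-normal element $z$ as well) and then cancelling.
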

\begin{proof}
By Theorem~\ref{geometric} we can construct a finite complete reduction
	diagram $\Gamma$ in
	$\left\langle A,R \right\rangle$ whose upper side is $p$,  left side
	is $q$ and which is tiled by e.d.s in $\ed$. It follows that $f\left( \Gamma \right)$ is a
	commutative diagram in $\cat$. 
Suppose that the label of the bottom right corner of $\Gamma$ is $c$. Then
the right  side of $\Gamma$ gives a path
$p'\colon b\path c$ and the bottom side of $\Gamma$ gives a path $q'\colon b\path
c$. From the commutativity of $\Gamma$ we get
\begin{equation}
\label{eq:ilike}
f\left( p' \right) f\left(
p \right) = f\left( q' \right)f\left( q \right).
\end{equation}
Now, since $b$ is semi-normal, there is a path $t\colon c\path b$. By the theorem
assumptions we have $f(tp') = 1_{f(b)} = f(tq')$. 
Thus from \eqref{eq:ilike}, we get
\begin{equation*}
f(p) = f(tp') f(p)=  f(t)f(p')f(p) = f(t)f(q')f(q) = f(tq')f(q) =f(q). 
\end{equation*}
\end{proof}
We will get several corollaries of Theorem~\ref{main:theory}.
\begin{corollary}\label{cor:mono}
Let $\left\langle A,R \right\rangle$ be an ARS, $\succeq$ be a well-founded preorder on $R$ and $\ed$ a complete set of
decreasing e.d.s.  Suppose that
\begin{enumerate}[i)]
\item $\path$ is a well-founded preorder on $A$;
\item for every $E\in \ed$ the diagram $f(E)$ is commutative;
\item for every $r\in R$ the map $f(r)$ is a monomorphism; 
\item for every $b\in \attr(A)$ and every path $p\colon b\path b$ the map $f(p)$
is equal to~$1_{f(b)}$.
\end{enumerate}
Then for any two paths $p$, $q\colon a\path b$ in $\left\langle A,R
\right\rangle$, we get $f(p) = f(q)$. 
\end{corollary}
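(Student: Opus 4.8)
The plan is to deduce Corollary~\ref{cor:mono} from Theorem~\ref{main:theory}. The only gap between the two statements is that Theorem~\ref{main:theory} requires the target path $q$ (equivalently $p$) to end at a semi-normal form $b\in\attr(a)$, whereas here $b$ is arbitrary. So the whole argument is: reduce the general case to the semi-normal case using the extra hypotheses (i) that $\path$ is well-founded and (iii) that every $f(r)$ is a monomorphism.

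First I would invoke the fact recorded just before Proposition~\ref{seminormal}: since $\path$ is a well-founded preorder on $A$, the element $b$ has a semi-normal form, i.e.\ there is a path $s\colon b\path c$ with $c\in\attr(A)$. Concatenating, $sp$ and $sq$ are two paths $a\path c$ with $c\in\attr(a)$, so all four hypotheses of Theorem~\ref{main:theory} are in force (hypotheses (ii) and (iv) of the corollary are literally hypotheses (i) and (ii) of the theorem, and the well-foundedness of $\succeq$ together with completeness and decreasingness of $\ed$ are assumed throughout). Hence Theorem~\ref{main:theory} yields
\begin{equation*}
f(s)f(p) = f(sp) = f(sq) = f(s)f(q).
\end{equation*}
Now I would observe that $f(s)$ is a monomorphism: $s$ is a composite of finitely many rules $r\in R$, each of which maps under $f$ to a monomorphism by hypothesis (iii), and a composite of monomorphisms is a monomorphism (and the empty path maps to an identity, which is a monomorphism). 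Cancelling $f(s)$ on the left of the displayed equality gives $f(p)=f(q)$, which is the assertion.

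I do not expect a genuine obstacle here; the statement is essentially a packaging lemma. The one point that needs a word of care is that $\path$ being well-founded is exactly what guarantees the existence of the semi-normal form $c$ — without it the reduction to Theorem~\ref{main:theory} is not available — and that the path $s\colon b\path c$ is a \emph{finite} reduction path (it is, by definition of reduction path), so that $f(s)$ is a finite composite and the monomorphism hypothesis on individual rules suffices. If one wanted to be scrupulous, the only thing left to check is this elementary closure of monomorphisms under composition, which is standard and needs no comment in the text.
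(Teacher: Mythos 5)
Your proposal is correct and follows essentially the same route as the paper's own proof: pass to a semi-normal form $b'\in\attr(b)=\attr(a)$ via a path $s\colon b\path b'$ (which exists by well-foundedness of $\path$), apply Theorem~\ref{main:theory} to $sp$ and $sq$, and cancel the monomorphism $f(s)$. The only difference is that you spell out explicitly that $f(s)$ is a finite composite of monomorphisms, a point the paper leaves implicit.
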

\begin{proof}
Note that $\attr(b)$ is non-empty since $\path$ is a well-founded preorder. 
Let $b'\in \attr(b) = \attr(a)$.  Then there is a path $s\colon b \path b'$. 
We get two composed paths $sp$, $sq\colon a \path b'$.  We
have $f(sp) = f(s)f(p)$ and  $f(sq) = f(s)f(q)$. By Theorem~\ref{main:theory} we
obtain $f(sp) = f(sq)$. Since $f(s)$ is a monomorphism
$f(s)f(p) = f(s)f(q)$ implies that $f(p)=f(q)$. 
\end{proof}
If $f\colon \left\langle A,R \right\rangle \to \cat$
is such that the map $f(r)$ is invertible for every
$r\in R$, then we can
define, in the obvious way, 
a morphism $f(z)\colon f(a) \to f(b)$ for every zigzag $z\colon a\zigzag b$. 

\begin{corollary}
\label{cor:iso}
Let $\left\langle A,R \right\rangle$ be an ARS, $\succeq$ be a well-founded preorder on $R$ and $\ed$ a complete set of
decreasing e.d.s.  Suppose that
\begin{enumerate}[i)]
\item $\path$ is a well-founded preorder on $A$;
\item for every $E\in \ed$ the diagram $f(E)$ is commutative;
\item for every $r\in R$ the map $f(r)$ is an isomorphism; 
\item for every $b\in \attr(A)$ and every path $p\colon b\path b$ the map $f(p)$
is equal to $1_{f(b)}$.
\end{enumerate}
Then for any two zigzags $z$, $z'\colon a\zigzag b$, we get $f(z) = f(z')$. 
\end{corollary}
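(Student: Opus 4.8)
The plan is to reduce Corollary~\ref{cor:iso} to Corollary~\ref{cor:mono} by
the standard trick of ``doubling'' the reduction system so that every rewriting
rule gets a formal inverse, and then to transport the hypotheses and conclusion
across this construction. Concretely, I would introduce the ARS $\langle
A,\widehat{R}\rangle$ with $\widehat{R} = R \cup R^{\mathrm{op}}$, i.e.\ for
every $(a,b)\in R$ we add the pair $(b,a)$. A zigzag in $\langle A,R\rangle$ is
then literally the same thing as a reduction path in $\langle
A,\widehat{R}\rangle$, so it would suffice to apply a ``path'' statement to the
doubled system. The obvious candidate is Corollary~\ref{cor:mono} (every
isomorphism is in particular a monomorphism), so the real content is to check
that the four hypotheses of Corollary~\ref{cor:mono} hold for $\langle
A,\widehat{R}\rangle$ and for the graph map $\widehat{f}\colon\langle
A,\widehat{R}\rangle\to\cat$ that sends $(a,b)\in R$ to $f(a,b)$ and sends the
formal inverse $(b,a)\in R^{\mathrm{op}}$ to $f(a,b)^{-1}$ (well defined since
each $f(r)$ is invertible by hypothesis~iii).

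The first hypothesis, that $\path$ is well-founded on $A$, is not available for
$\widehat R$: in the doubled system $\path$ is typically not even antisymmetric,
and every element is reachable from every other in its component. This is the
step I expect to be the main obstacle, and it means a direct application of
Corollary~\ref{cor:mono} to $\langle A,\widehat R\rangle$ is not possible
verbatim. The fix is to observe that in $\langle A,\widehat R\rangle$ every
element is semi-normal: if $a\widehat\path b$ via $a_0\widehat\path\cdots
\widehat\path a_k$, then reversing each step (each step or its reverse lies in
$R$, and $\widehat R$ is symmetric) gives $b\widehat\path a$. Hence
$\attr(\langle A,\widehat R\rangle) = \langle A,\widehat R\rangle$, and the
role played by hypothesis~i) in Corollary~\ref{cor:mono} — guaranteeing that
$\attr(b)$ is non-empty and that one may slide $p,q$ down to a common
semi-normal form — is trivially satisfied with $b$ itself as the semi-normal
form. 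So rather than invoke Corollary~\ref{cor:mono} as a black box, I would
re-run its (very short) proof, or better, re-run the proof of
Theorem~\ref{main:theory} directly for $\langle A,\widehat R\rangle$: given two
zigzags $z,z'\colon a\zigzag b$, i.e.\ two paths $a\widehat\path b$, use
Corollary~\ref{cor:zigzag} (applied to $\langle A,\widehat R\rangle$, the
doubled preorder, and a complete set of decreasing e.d.s for $\langle A,\widehat
R\rangle$) to complete the zigzag diagram $z$ over $z'$ to a diagram of the
form~\eqref{completezz} tiled by e.d.s, conclude $\widehat f$ of this diagram
commutes, and then cancel using invertibility of every $\widehat f(r)$ and
hypothesis~iv) that loops at $b$ are sent to the identity.

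The remaining points to verify for the doubled data are: (a) that a well-founded
preorder and a complete set of decreasing e.d.s for $R$ induce the same for
$\widehat R$ — here the cleanest choice is to keep the \emph{same} preorder,
declaring $(b,a)\sim(a,b)$ for each $(a,b)\in R$ (equivalently, pull back
$\succeq$ along the surjection $\widehat R\to R$), which remains well-founded
since equivalent elements are identified, and observe that each e.d.\ of
$\langle A,R\rangle$ together with the four ``reversed'' e.d.s obtained by
flipping arrows stays decreasing because the decreasingness conditions are
phrased entirely in terms of $\sim$ and $\succ$ among the participating rules,
which are unchanged; and that this enlarged family is complete for $\langle
A,\widehat R\rangle$ since for each ordered pair of steps out of a common source
in $\widehat R$ one of the (at most four) cases supplies a suitable e.d.; (b)
that $\widehat f(E)$ commutes for every such $E$ — true by hypothesis~ii) for
the original e.d.s, and for a reversed e.d.\ because inverting all the arrows of
a commutative square keeps it commutative; (c) that every loop $p\colon
b\widehat\path b$ has $\widehat f(p)=1_{f(b)}$ — this I would obtain from
hypothesis~iv): a loop in $\langle A,\widehat R\rangle$ need not be a loop of
$\langle A,R\rangle$, but one can first complete it via
Corollary~\ref{cor:zigzag} to compare it with the trivial loop, reducing to
commutativity of e.d.s and cancellation, exactly as in the main argument, so
this is subsumed. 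Once (a)–(c) are in place, the cancellation step finishing
$\widehat f(z)=\widehat f(z')$ is immediate, and unwinding $\widehat f$ back to
$f$ gives the stated conclusion $f(z)=f(z')$.
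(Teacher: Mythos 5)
Your overall strategy --- symmetrize to $\langle A,\widehat R\rangle$ with $\widehat R=R\cup R^{\mathrm{op}}$ so that zigzags become paths, then rerun the path-comparison machinery there --- breaks down at precisely the step you flag as needing verification, namely (a). The problem is not completeness of the enlarged family of e.d.s but decreasingness. The decreasingness conditions are not symmetric in the four sides of an e.d.: the top and left arrows are the two rules being compared, while the steps of the right and bottom paths must be \emph{strictly} dominated by one of them, except for a single distinguished step allowed to be $\sim$-equivalent to the opposite input arrow. Reversing an arrow of an e.d.\ moves a rule from one of these roles to the other, so your justification --- that decreasingness depends only on the $\succ$/$\sim$ relations among the participating rules, which the pullback leaves unchanged --- is not valid. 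Concretely, take $R=\{(c,a),(a,b)\}$ with $(a,b)\not\sim(c,a)$ (the original system is locally confluent with decreasing e.d.s for any well-founded preorder, since it has no nontrivial corners), and consider the mixed corner with top $u=(a,b)\in R$ and left $l=(c,a)^{\mathrm{op}}$. Under the pulled-back preorder, every possible convergence of $b$ and $c$ in $\widehat R$ (to $w=a$, $w=b$ or $w=c$) contains a bottom step $\sim l$, or a right step $\sim u$, sitting in a position where the definition demands strict domination, and the remaining requirements force both $(a,b)\succ(c,a)$ and $(c,a)\succ(a,b)$. So no decreasing e.d.\ exists for this corner, Theorem~\ref{geometric} cannot be invoked for $\langle A,\widehat R\rangle$, and your appeal to Corollary~\ref{cor:zigzag} ``for the doubled system'' has nothing to stand on. (A genuinely different preorder on $\widehat R$, e.g.\ one placing each $\rho^{\mathrm{op}}$ strictly above $\rho$ within its class, might be made to work, but that is a new argument you would have to supply and verify.) Your point (c) has a similar circularity: you propose to handle loops in $\widehat R$ by the very completion procedure whose hypotheses you are in the middle of establishing.

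The paper never doubles the system. It arranges $z$ and $z'$ into a single staircase diagram of type~\eqref{zigzag} built entirely from genuine $R$-paths, and applies Corollary~\ref{cor:zigzag} --- which only ever adjoins e.d.s of the \emph{original} system at corners whose two arrows both lie in $R$ --- to complete it to a diagram of type~\eqref{completezz}. This produces two honest parallel paths $p,q\colon b\path c$ closing off the boundary; Corollary~\ref{cor:mono} (this is where hypotheses i) and iv) enter) gives $f(p)=f(q)$, and since every tile maps to a commutative square and every $f(r)$ is invertible, the boundary relation $f(z)f(z')^{-1}=f(p)^{-1}f(q)=1_{f(b)}$ yields the claim. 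Your final cancellation and your use of hypotheses ii)--iv) match this; what is missing is the reduction that makes them applicable without ever needing decreasing e.d.s for reversed rules.
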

\begin{proof}
The zigzags $z$ and $z'$ fit in the following reduction diagram
\begin{equation*}
\xymatrix{
&& b\\
& a \ar@{~>}[ur]^{z} \ar@{~>}[dl]_{z'} \\ b
}
\end{equation*}
of type~\eqref{zigzag}. Now, by Corollary~\ref{cor:zigzag}, there is a diagram
of type 
\begin{equation*}
\xymatrix{
&& b \ar[dd]^{p} \\
& a \ar@{~>}[ur]^{z} \ar@{~>}[dl]_{z'} \\ b \ar[rr]^{q} && c,
}
\end{equation*}
which is tiled by diagrams in $\ed$. 
Note, that by Corollary~\ref{cor:mono}, we have $f(p) = f(q)$. 
Since every diagram $f(E)$, $E\in \ed$, is commutative and every map $f(r)$,
$r\in R$, is an isomorphism, we get that 
\begin{equation*}
f(z) f(z')^{-1} = f(p)^{-1} f(q) = 1_{f(b)}. 
\end{equation*}
Therefore $f(z) = f(z')$. 
\end{proof}
\begin{example}
\label{terminating}
Suppose $\left\langle A,R \right\rangle$ is a terminating locally confluent ARS. 
By \cite[Corollary~4.4]{oostrom}
all elementary diagrams for a terminating ARS can be made decreasing.
Further, for every $b\in \attr(A)$ the only path $b\path b$ is the empty one.
Thus we have $f(b\path b) = 1_{f(b)}$ for any map of graphs $f\colon
\left\langle A,R \right\rangle \to \cat$.  
Hence if $f $ is such that
$f(r)$ is an isomorphism for all $r\in R$ and $f(E)$ is commutative for all
$E$ in a complete set $\ed$ of e.d.s, then $f(z)=f(z')$ for any two zigzags
$z$, $z'\colon a\zigzag b$ in $\left\langle A,R \right\rangle$. 
\end{example}

\section{Actions of monoids on categories}
\label{action}

Let $\cat$ be a category and $M$ a monoid with  neutral element $e$. Following
\cite{deligne} we define a \emph{(pseudo)action}
$(\fcal,\lambda)$ of $M$ on $\cat$ as a collection of
\begin{enumerate}[i)]
\item   endofunctors $F_a\colon \cat \to \cat$, $a\in M$,
such that  $F_e \cong Id $ via the natural isomorphism $\eta$;
\item natural isomorphisms $\lambda_{a,b}\colon F_{a}F_b\to F_{ab}$,
such that for all $a$, $b$, $c\in M$ the diagram
\begin{equation*}
\xymatrix{ F_aF_bF_c \ar[r]^-{\lambda_{a,b}F_c} \ar[d]_-{F_a \lambda_{b,c}} &
F_{ab}F_c\ar[d]^-{\lambda_{ab,c}} \\
F_a F_{bc} \ar[r]^-{\lambda_{a,bc}} & F_{abc}}
\end{equation*}
commutes, and $\lambda_{e,a}$, $ \lambda_{a,e}$ 
are induced by $\eta$. 
\end{enumerate}
Given an action $(F,\lambda)$ on $\cat$ and a sequence of elements $a_1$, \dots,
$a_k$, with $k\ge 3$,  we will define recursively the natural isomorphism $\lambda_{a_1,\dots,a_k}$  from 
$F_{a_1}\dots F_{a_k}$ to $F_{a_1\dots a_k}$ by 
\begin{equation*}
\lambda_{a_1,\dots,a_k} = \lambda_{a_1,a_2 \cdots a_k} \circ
F_{a_1} (\lambda_{a_2,\dots,a_k}).\end{equation*}

The actions of $M$ on $\cat$ form a category $[M;\cat]$, where
a morphism from $(F,\lambda)$ to $(F',\lambda')$ is given by a collection of
natural transformations $\rho_a\colon F_a \to F'_a$, $a\in M$, such that the
diagrams
\begin{equation*}
\xymatrix{
F_a F_b \ar[r]^{\lambda_{a,b}} \ar[d]_{\rho_a \rho_b} &  F_{ab}
\ar[d]^{\rho_{ab}} \\
F'_a F'_b \ar[r]^{\lambda'_{a,b}} & F'_{ab}
}
\quad
\xymatrix{
F_e \ar[r]^{\eta} \ar[d]_{\rho_e} & \Id\\ F'_e \ar[ru]_{\eta'} & 
}
\end{equation*}
commute.
From a more abstract point of view, the category of actions of $M$ on $\cat$ is
the category of pseudofunctors from the category $(*,M)$ 
to the $2$-category $\mathrm{Cat}$ of categories.

Suppose $\left\langle X,\rel \right\rangle$ is a presentation of $M$. 
Given $(F,\lambda) \in [M;\cat]$, we get a collection of functors $F_x\colon
\cat\to \cat$ and natural isomorphisms 
\begin{equation*}
\tau_r \colon F_{a_1} \dots F_{a_k} \xrightarrow{\lambda_{a_1,\dots, a_k}}
F_{a_1\dots a_k} = F_{b_1\dots b_l}
\xrightarrow{\lambda_{b_1,\dots,b_l}^{-1}} F_{b_1}\dots F_{b_l}
\end{equation*}
for every relation $r=(a_1\dots a_k,b_1\dots b_l)$ in $\rel$. 
Let us denote by $[X,r; \cat]$ the category whose objects are pairs 
$( (F_x)_{x\in X},(\tau_r)_{r\in \rel})$ 
where $F_x$ are endofunctors of $\cat$ and, for every $r=(a_1\dots a_k, b_1\dots,
b_l)$, $\tau_r$ is a natural isomorphism from $F_{a_1}\dots F_{a_k}$ to
$F_{b_1}\dots F_{b_l}$.  
The  morphisms from $(F,\tau)$ to
$(F',\tau')$ in $[X,r;\cat]$  are  families of  natural transformations $\rho_x\colon F_x\to
F'_x$, $x\in X$, such that for all $r=(a_1\dots a_k,b_1\dots b_l)\in \rel$ the diagrams
\begin{equation*}
\xymatrix{
 F_{a_1} \dots F_{a_k} \ar[r]^{\tau_r} \ar[d]_{\rho_{a_1}\dots \rho_{a_k}} &
F_{b_1}\dots F_{b_l} \ar[d]^{\rho_{b_1}\dots \rho_{b_l}}
\\
  F'_{a_1} \dots F'_{a_k}  \ar[r]^{\tau'_r} & F'_{b_1}\dots F'_{b_l}
}
\end{equation*}
are commutative.
Then we get from the construction described above the restriction functor
$\res\colon [M;\cat] \to [X,\rel;\cat]$. 
\begin{theorem}\label{trm:subcat}
The functor $\res\colon [M;\cat] \to [X,\rel;\cat]$ is full and faithful.
\end{theorem}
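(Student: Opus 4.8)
The plan is to prove fullness and faithfulness separately, in both cases working directly from the axioms of an action and exploiting the fact that every word in $X$ representing $m\in M$ gives rise, via the iterated $\lambda$'s, to a canonical isomorphism with $F_m$, and that any two such words related by relations in $\rel$ induce the *same* isomorphism. This last fact is where Theorem~\ref{main:theory} (or rather its consequences, Corollary~\ref{cor:iso}) enters: applied to the ARS attached to the presentation $\langle X,\rel\rangle$, with the functor sending a letter $x$ to $F_x$ and a relation $r$ to $\tau_r$, it tells us that $f$ is well-defined on zigzags, i.e. any two factorizations of $m$ yield coherent comparison data. (Strictly, the monoid machinery of Section~\ref{monoids} is what packages this; I will cite it.)

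\medskip
\noindent\textbf{Faithfulness.} Suppose $(F,\lambda),(F',\lambda')\in[M;\cat]$ and $\rho,\sigma\colon (F,\lambda)\to(F',\lambda')$ are two morphisms of actions with $\res(\rho)=\res(\sigma)$, i.e.\ $\rho_x=\sigma_x$ for all $x\in X$. I must show $\rho_m=\sigma_m$ for all $m\in M$. Pick a word $x_1\cdots x_k$ representing $m$. The compatibility of a morphism of actions with the $\lambda$'s (applied iteratively, using the recursive definition of $\lambda_{a_1,\dots,a_k}$) gives a commutative square
\begin{equation*}
\xymatrix{
F_{x_1}\cdots F_{x_k} \ar[r]^-{\lambda_{x_1,\dots,x_k}} \ar[d]_-{\rho_{x_1}\cdots\rho_{x_k}} & F_m \ar[d]^-{\rho_m}\\
F'_{x_1}\cdots F'_{x_k} \ar[r]^-{\lambda'_{x_1,\dots,x_k}} & F'_m
}
\end{equation*}
and likewise with $\sigma$. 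Since $\rho_{x_i}=\sigma_{x_i}$ and all horizontal maps are isomorphisms, we read off $\rho_m=\lambda'_{x_1,\dots,x_k}\circ(\rho_{x_1}\cdots\rho_{x_k})\circ\lambda_{x_1,\dots,x_k}^{-1}=\sigma_m$. This step is essentially routine; the only care needed is the induction establishing the displayed square for words of arbitrary length, which follows from the $k=2$ case in the definition of a morphism in $[M;\cat]$ together with functoriality.

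\medskip
\noindent\textbf{Fullness.} This is the main obstacle. Given $(F,\lambda),(F',\lambda')\in[M;\cat]$ and a morphism $\rho=(\rho_x)_{x\in X}$ in $[X,\rel;\cat]$ between their images, I must produce a morphism $\widetilde\rho=(\widetilde\rho_m)_{m\in M}$ of actions restricting to it. The only possible definition is to \emph{use} the formula forced by faithfulness: fix for each $m$ a word $\underline m=x_1\cdots x_k$ representing it and set $\widetilde\rho_m:=\lambda'_{x_1,\dots,x_k}\circ(\rho_{x_1}\cdots\rho_{x_k})\circ\lambda_{x_1,\dots,x_k}^{-1}$, with $\widetilde\rho_x=\rho_x$ on generators (choosing $\underline x=x$). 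Three things then have to be checked: (a) $\widetilde\rho_m$ is \emph{independent of the chosen word}; (b) each $\widetilde\rho_m$ is a natural transformation $F_m\to F'_m$ and $\widetilde\rho_e$ is compatible with $\eta,\eta'$; (c) the squares defining a morphism of actions, $\lambda'_{a,b}\circ(\widetilde\rho_a\cdot\widetilde\rho_b)=\widetilde\rho_{ab}\circ\lambda_{a,b}$, commute for all $a,b\in M$.

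For (a): two words $w,w'$ representing $m$ are connected by a zigzag of elementary relations from $\rel$; the compatibility of $\rho$ with every $\tau_r$ (the defining squares of $[X,\rel;\cat]$), together with the description of $\tau_r$ via $\lambda,\lambda^{-1}$ and the already-used fact that $\lambda$-data for different words of the \emph{same} element $m$ agree (Theorem~\ref{main:theory}/Corollary~\ref{cor:iso} applied to the map of graphs $f\colon\langle X,\rel\rangle\to\cat$, $x\mapsto F_x$, $r\mapsto\tau_r$, or directly the coherence of $\lambda$ from axiom~(ii)), shows the two expressions for $\widetilde\rho_m$ coincide. Concretely: write $w=uxy v$, $w'=uab\cdots v$ differing by one application of a relation $r=(xy,ab\cdots)$; the square for $\tau_r$ bridges the middle, and the outer $\lambda$'s for the common prefixes/suffixes cancel — this is a diagram chase but not a hard one. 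For (b): naturality is inherited because $\widetilde\rho_m$ is a composite of natural transformations and natural isomorphisms; the unit compatibility follows by taking $\underline e$ to be the empty word. For (c): choosing words $\underline a$, $\underline b$ and using their concatenation $\underline a\,\underline b$ (which represents $ab$, so by (a) may be used to compute $\widetilde\rho_{ab}$), the square reduces — after inserting $\lambda'_{\underline a\,\underline b}=\lambda'_{\cdot}\circ(\lambda'_{\underline a}\cdot\lambda'_{\underline b})$-type identities coming from the recursive definition and coherence of $\lambda$ — to the functoriality of horizontal composition and the already-established $k=2$ compatibility packaged in $\rho_{x_i}$. I expect (a) to be the genuinely delicate point, because it is exactly where one needs that the relations $\rel$ suffice to connect all words (presentation of $M$) \emph{and} that the coherence axiom~(ii) makes the resulting comparison maps well-defined; everything else is bookkeeping with commuting squares.
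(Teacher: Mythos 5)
Your proposal is correct and takes essentially the same approach as the paper: faithfulness from the reconstruction square $\rho_m=\lambda'_{x_1,\dots,x_k}\circ(\rho_{x_1}\cdots\rho_{x_k})\circ\lambda_{x_1,\dots,x_k}^{-1}$, and fullness by defining $\rho_m$ from a chosen word for $m$, checking independence of the word via zigzags and the $\tau_r$-compatibility squares, and verifying the action-morphism condition via the concatenated word for $m_1m_2$. The one point to adjust is that the coherence of the iterated $\lambda$'s needed in your step (a) cannot be obtained by applying Corollary~\ref{cor:iso} to the ARS of $\left\langle X,\rel\right\rangle$ (no well-foundedness or decreasing e.d.s are assumed for that ARS in this theorem); it comes instead, as in Example~\ref{full} and Proposition~\ref{factorize}, from the terminating confluent ARS of the tautological presentation $\left\langle M,\widetilde{\rel}\right\rangle$, to which zigzags in $\left\langle X^*,X^*\rel X^*\right\rangle$ are transported --- your fallback appeal to axiom~(ii) is the correct underlying source.
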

Theorem~\ref{trm:subcat} should be well-known. 
In Section~6, we reobtain it as a consequence of Corollary~\ref{cor:iso}. 


It is clear that it is easier to specify objects in $[X,\rel;\cat]$ than
 objects in $[M;\cat]$. Therefore it is important to have a description of the
essential image of the functor $\res$. This can be done using the coherent
presentations of $M$ described in the next section.  

\section{Monoids and ARS}
\label{monoids}
Let $M$ be a monoid with neutral element $e$ and $\left\langle X,\rel
\right\rangle$ a presentation of $M$. 
We will denote by   $X^*$  the set of all finite words over the alphabet $X$. 
The set $X^*$ will be considered  as a free monoid with multiplication given by
 concatenation of words and  neutral element given by the empty word
$\varnothing$. 
Denote by $\phi$ the canonical epimorphism from $X^*$ to $M$. 
 Let
\begin{equation*}
X^* \rel X^*:= \left\{\, (w_1w w_2, w_1 w'w_2) \,\middle|\, w_1,w_2\in X^*,\, (w,w')\in \rel
\right\}\subset X^* \times X^*. 
\end{equation*} 
We will sometimes write the elements of $X^*\rel X^*$ in the form $w_1 r w_2$
with $r\in \rel$. 
Let us consider the ARS $\left\langle X^*,X^* \rel X^* 
\right\rangle$. 
It is clear that if $w_1 \path w_2$ in $\left\langle X^*,X^*\rel X^*
\right\rangle$, then $\phi(w_1) = \phi(w_2)$. 
\begin{proposition}
Suppose $\left\langle X^*,X^*\rel X^* \right\rangle$ is confluent and $\path$
is a well-founded preorder. Then $\phi(u) = \phi(v)$ if and only if
$\attr(u) = \attr(v)$.
\end{proposition}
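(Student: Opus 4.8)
Here is my plan. The statement to prove is: assuming $\langle X^*, X^*\rel X^*\rangle$ is confluent and $\path$ is well-founded, then $\phi(u)=\phi(v)$ iff $\attr(u)=\attr(v)$.

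The plan is to prove the two implications separately. For the easy direction, suppose $\attr(u)=\attr(v)$. Since $\path$ is well-founded, by Proposition~\ref{seminormal} there exists a semi-normal form $b$ of $u$, so $b\in\attr(u)=\attr(v)$, hence $b$ is also a semi-normal form of $v$. Thus there are reduction paths $u\path b$ and $v\path b$. Since $\phi$ is a monoid homomorphism and $w_1\path w_2$ in $\langle X^*,X^*\rel X^*\rangle$ implies $\phi(w_1)=\phi(w_2)$ (as noted just before the proposition, because each rewriting step replaces $w$ by $w'$ with $(w,w')\in\rel$ and relations hold in $M$), we get $\phi(u)=\phi(b)=\phi(v)$.

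For the converse, suppose $\phi(u)=\phi(v)$. Because $\langle X,\rel\rangle$ is a presentation of $M$, the kernel congruence of $\phi$ is generated by $\rel$, so $\phi(u)=\phi(v)$ means exactly that $u$ and $v$ are connected by a zigzag in $\langle X^*,X^*\rel X^*\rangle$, i.e. $u = w_0, w_1,\dots, w_k = v$ where consecutive words differ by a single application of a relation from $\rel$ in either direction. It therefore suffices to show $\attr(w_{i-1})=\attr(w_i)$ whenever $(w_{i-1},w_i)\in X^*\rel X^*$ (the reverse direction being symmetric), and then chain these equalities. So I reduce to the core claim: if $a\to b$ in the ARS, then $\attr(a)=\attr(b)$. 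This I would establish as follows. If $b'\in\attr(b)$, then $a\to b\path b'$ gives $a\path b'$ with $b'$ semi-normal, so $b'\in\attr(a)$; hence $\attr(b)\subseteq\attr(a)$. Conversely, let $a'\in\attr(a)$, so $a\path a'$ with $a'$ semi-normal. Since $a\to b$ and $a\path a'$, confluence gives $c$ with $b\path c$ and $a'\path c$; since $a'$ is semi-normal, $a'\path c$ implies $c\path a'$, whence $b\path c\path a'$, so $a'$ is reachable from $b$ and is semi-normal, i.e. $a'\in\attr(b)$. Thus $\attr(a)=\attr(b)$.

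I expect the main conceptual point — rather than a computational obstacle — to be the first step of the converse: making precise that $\phi(u)=\phi(v)$ translates into the existence of a zigzag in $\langle X^*, X^*\rel X^*\rangle$. This is essentially the definition of what it means for $\langle X,\rel\rangle$ to present $M$ (the congruence on $X^*$ generated by $\rel$ has quotient $M$), together with the standard description of a congruence generated by a set of pairs as the reflexive–transitive–symmetric closure of the one-step replacements $w_1 w w_2 \leftrightarrow w_1 w' w_2$. Everything else is a short diagram chase using confluence and the definition of semi-normality, already packaged in Proposition~\ref{seminormal}; once the core claim $\attr(a)=\attr(b)$ for $a\to b$ is in hand (this is in fact already remarked in Section~\ref{cat}, where it is observed that $a\in\attr(A)$ and $(a,b)\in R$ imply $b\in\attr(A)$), the rest is formal.
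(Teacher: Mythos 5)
Your proof is correct and follows the same overall strategy as the paper: translate $\phi(u)=\phi(v)$ into a zigzag in $\left\langle X^*, X^*\rel X^*\right\rangle$ and then use confluence to identify attractors. The two arguments differ only in how confluence is deployed on the zigzag. The paper first collapses the entire zigzag to a single common reduct $w$ with $u\path w$ and $v\path w$ (citing ``the same reasoning as in the proof of Corollary~\ref{cor:zigzag}''), and then concludes $\attr(u)=\attr(w)=\attr(v)$; note that this last equality itself silently uses the one-step/one-path invariance of attractors that you prove explicitly. Your version instead establishes the local claim that $a\to b$ implies $\attr(a)=\attr(b)$ (one inclusion trivial, the other by confluence plus semi-normality) and chains it along the zigzag. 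This is arguably the cleaner route: the paper's appeal to Corollary~\ref{cor:zigzag} is slightly loose, since that corollary's proof runs through Theorem~\ref{geometric} and decreasing elementary diagrams, which are not among the hypotheses here --- what is really needed is just the standard induction showing that confluence implies the Church--Rosser property, and your step-by-step argument supplies exactly that. You also make explicit, via well-foundedness of $\path$ and Proposition~\ref{seminormal}, that $\attr(u)$ is non-empty in the ``if'' direction, a point the paper dismisses as obvious.
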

\begin{proof}
The ``if'' part is obvious. Suppose $\phi(u) = \phi(v)$. Then there is a
sequence of words
\begin{equation*}
u=w_0, w_1, \dots, w_k = v
\end{equation*}
such  that $(w_j, w_{j-1}) \in X^* \rel X^*$ or $(w_{j-1},w_j) \in X^*\rel X^*$
for all $1\le j\le k$. In other words we have a zigzag $u\zigzag v$ in
$\left\langle X^*, X^* \rel X^* \right\rangle$. Using the fact that $\left\langle X^*,
X^*\rel X^*
\right\rangle$ is confluent and following the same reasoning as in the
proof of Corollary~\ref{cor:zigzag}, we conclude that there are $w \in X^*$ and
two
paths $u\path w$, $v\path w$ in $\left\langle X^*, X^*\rel \right\rangle$. Thus
\begin{equation*}
\attr(u) = \attr(w) = \attr(v). 
\end{equation*} 
\end{proof}
We will denote by $l(w)$ the length of $w\in X^*$. 
If $r = (u,v) \in \rel$, then we define $s(r) = u$ and $t(r)=v$. 
\begin{definition}
\label{criticalpair}
A \emph{critical pair} is a 
pair of elements in $X^* \rel X^*$ of one of the forms
\begin{enumerate}[i)]
\item  $(ur,r'v)$ with $us(r) = s(r')v$ in $X^*$;
\item $(r, ur'v)$ with $s(r) = u s(r') v$. 
\end{enumerate}
We say that a critical pair of the first type is \emph{convergent} if there is
$w\in X^*$ and there are paths $ut(r) \path w$, $t(r')v \path w$. A critical
pair of the second type is called \emph{convergent} if there is $w\in X^*$ and
there are paths $t(r) \path w$, $ut(r')v\path w$ in $\left\langle X^*, X^* \rel
X^*
\right\rangle$. 
\end{definition}
Given convergent critical pairs $(ur,r'v)$, $(r,ur'v)$ and convergence paths
$ut(r) \path w$, $t(r')v\path w$, $t(r) \path w'$, $ut(r')v\path w'$, we
define the following  e.d.s in $\left\langle X^*,X^*\rel X^* \right\rangle$
\begin{equation}
\label{criticaled}
\begin{aligned}
\xymatrix{
{\phantom{s(r)u=}  s(r')v = us(r)} \ar[r]^-{ur} \ar[d]_{r'v} & u t(r) \ar@{->>}[d] \\
 t(r')v \ar@{->>}[r] & w
}& 
\xymatrix{
{\phantom{s(r)=} us(r')v = s(r)} \ar[d]_{ur'v} \ar[r]^-{r} & t(r)
\ar@{->>}[d]\\
ut(r')v \ar@{->>}[r] & w'.
}
\end{aligned}
\end{equation}
We will call the  e.d.s \eqref{criticaled} a  \emph{critical e.d.s}.
Let $\ced$ be a set of critical e.d.s. We say that $\ced$ is complete
if for every critical pair there is at least one corresponding  critical e.d. in
$\ced$.

 Let $r$, $r'\in \rel$ and $w \in W$. We will denote by $\ned(r,w,r')$ the e.d. \begin{equation*}
\xymatrix{
s(r) w s(r') \ar[r]^{rws(r')} \ar[d]_{s(r)w r'} & t(r)ws(r')\ar[d]^{t(r)w r'}\\
s(r) w t(r') \ar[r]^{rwt(r')}  & t(r) wt(r').
}
\end{equation*}
The e.d. $\ned(r,w,r')$ is called a \emph{natural e.d.}
 We write $\ned$ for the set of all natural e.d.s.
Given an e.d. 
\begin{equation*}
E :=
\begin{gathered}
 \xymatrix{
u \ar[r] \ar[d] & v \ar@{->>}[d]  \\
w \ar@{->>}[r] & z
}
\end{gathered}
\end{equation*}
 in $\left\langle X^*, X^*\rel X^* \right\rangle$ and words
$w_1$, $w_2\in X^*$, we define \begin{equation*}
w_1 E w_2 =
\begin{gathered}
 \xymatrix{
w_1 uw_2 \ar[r] \ar[d] & w_1 v w_2\ar@{->>}[d]  \\
w_1 w w_2\ar@{->>}[r] & w_1 z w_2
}
\end{gathered},
\quad  \quad 
E^t =
\begin{gathered}
\xymatrix{
u \ar[r] \ar[d] & w \ar@{->>}[d]  \\
v \ar@{->>}[r] & z
}
\end{gathered}.
\end{equation*} 
Let $\ced$ be a complete set of critical e.d.s. Then the set 
\begin{equation}
\label{eds}
\ed = X^* \ned
X^* \sqcup X^* \ned^t X^* \sqcup X^* \ced X^* \sqcup X^* \ced^t X^*
\end{equation}
 is a complete set of e.d.s for the ARS $\left\langle
X^*, X^* \rel X^* \right\rangle$.

We say that a preorder $\succeq$ on $X^* \rel X^*$ is \emph{monomial} if 
\begin{enumerate}[1)]
\item for every
$\rho_1$, $\rho_2\in X^*\rel X^*$ such that 
$\rho_1 \succ \rho_2$ and for every
$w\in X^*$, we have
\begin{equation*}
w \rho_1 \succ w \rho_2, \quad \rho_1 w \succ \rho_2 w;
\end{equation*}
\item for every
$\rho_1$, $\rho_2\in X^*\rel X^*$ such that 
$\rho_1 \sim \rho_2$ and for every
$w\in X^*$, we have
\begin{equation*}
w \rho_1 \sim w \rho_2, \quad \rho_1 w \sim \rho_2 w.
\end{equation*}
\end{enumerate}
Let us state for the late use 
\begin{proposition}
\label{decreasinged}
Let $\left\langle X,\rel \right\rangle$ be a presentation of a monoid $M$,
$\ced$ a complete set of critical e.d.s and $\ned$ the set of all natural
e.d.s. Define $\ed$  as in~\eqref{eds}.
Suppose $\succeq$ is a monomial preorder on $X^* \rel X^*$ such that all the
e.d.s in $\ced$ and $\ned$ are decreasing. Then all e.d.s in $\ed$ are 
decreasing as well. 
\end{proposition}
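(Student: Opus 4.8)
The plan is to reduce the statement to two closure properties of the class of decreasing elementary diagrams, and then assemble them. The first property is that an e.d. $E$ is decreasing if and only if its transpose $E^t$ is; the second is that if $E$ is decreasing and $\succeq$ is monomial, then $w_1 E w_2$ is decreasing for all $w_1$, $w_2\in X^*$. Granting these, the proof finishes in one line: by hypothesis every e.d. in $\ced$ and in $\ned$ is decreasing, so by the first property the same holds for every e.d. in $\ced^t$ and $\ned^t$; and then by the second property every e.d. in $X^*\ned X^*$, $X^*\ned^t X^*$, $X^*\ced X^*$ and $X^*\ced^t X^*$ is decreasing, that is, every e.d. in $\ed$ as defined in~\eqref{eds}.

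To establish the first property I would write a generic e.d. $E$ with top arrow $u$, left arrow $l$, right reduction path $r_1,\dots,r_m$ and bottom reduction path $d_1,\dots,d_n$, and note that $E^t$ interchanges $u$ with $l$ and the right path with the bottom path. Reading off the definition of a decreasing e.d., condition~1) for $E^t$ --- a constraint on the path $r_1,\dots,r_m$ phrased in terms of $l$ and $u$ --- is literally condition~2) for $E$, and symmetrically condition~2) for $E^t$ is condition~1) for $E$, with the index $j$ of the one playing the role of the index $s$ of the other. Hence $E$ is decreasing exactly when $E^t$ is, which settles the $\ced^t$ and $\ned^t$ parts once $\ced$ and $\ned$ are known.

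For the second property, every arrow of $w_1 E w_2$ is of the form $w_1\rho w_2$ with $\rho\in X^*\rel X^*$ the corresponding arrow of $E$. Invoking the two monomiality axioms in turn, first for left multiplication by $w_1$ and then for right multiplication by $w_2$, gives $\rho_1\succ\rho_2\Rightarrow w_1\rho_1 w_2\succ w_1\rho_2 w_2$ and $\rho_1\sim\rho_2\Rightarrow w_1\rho_1 w_2\sim w_1\rho_2 w_2$. So every relation $\succ$ or $\sim$ that holds among the top arrow, left arrow and entries of the bounding paths of $E$ still holds after applying $\rho\mapsto w_1\rho w_2$; consequently the indices $j$ and $s$ witnessing decreasingness of $E$ also witness decreasingness of $w_1 E w_2$. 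I do not anticipate a real obstacle here: the only delicate points are the termwise matching in the transposition step --- checking that the three sub-clauses of conditions~1) and~2) correspond correctly under the swap of $u$ and $l$ --- and remembering that the monomiality axioms are one-sided, hence must be used twice to handle the two-sided operation $\rho\mapsto w_1\rho w_2$.
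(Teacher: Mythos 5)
Your proof is correct. The paper actually states Proposition~\ref{decreasinged} without any proof (it is introduced only ``for later use''), so there is no argument of the author's to compare against; your two closure properties --- that decreasingness is preserved under transposition $E\mapsto E^t$ because conditions~1) and~2) in the definition swap roles when $u$ and $l$ and the two bounding paths are interchanged, and that a monomial preorder transports every instance of $\succ$ and $\sim$ along $\rho\mapsto w_1\rho w_2$ so the same witnessing indices $j$ and $s$ work for $w_1Ew_2$ --- are exactly the intended content, and assembling them over the four pieces of the disjoint union~\eqref{eds} completes the argument.
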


\section{Coherent presentation}
\label{coherent}
Let $\left\langle X,\rel \right\rangle$ be a presentation of a monoid $M$. 
Suppose $(F,\tau)\in\left[ X,\rel;\cat \right]$.
Denote by $\End(\cat)$ the category of endofunctors of $\cat$. 
 Define the map of graphs
$f_{F,\tau} \colon \left\langle X^*,X^*\rel X^* \right\rangle \to \End(\cat)$ by
\begin{equation*}
\begin{aligned}
f_{F,\tau} (x_1\dots x_k) &= F_{x_1} \dots F_{x_k}, \quad x_i\in X\\
f_{F,\tau} (x_1 \dots x_k r y_1 \dots y_l)& =  F_{x_1} \dots F_{x_k} \tau_r
 F_{y_1} \dots F_{y_l}, \quad x_i,y_j \in X,\ r\in\rel.
\end{aligned}
\end{equation*}
We will also use the following  natural abbreviations
\begin{equation*}
\begin{aligned}
F_w &:= f_{F,\tau}(w),\ w\in X^*; &  \tau_{\rho} := f_{F,\tau} (\rho), \rho\in
X^* \rel X^*
\end{aligned}
\end{equation*}
 For every path $p$
\begin{equation*}
w_0 \xrightarrow{r_1} w_1 \rightarrow \dots \xrightarrow{r_k} w_k
\end{equation*}
in $\left\langle X^*, X^*\rel X^* \right\rangle$ we denote by $\tau_p$ the
natural isomorphism $\tau_{r_k} \dots \tau_{r_1}$. 
If $p\colon w \path w$ is of length zero then $\tau_p$ is the identity
transformation of $F_w$. Now, given a zigzag $z$
\begin{equation*}
w_0 \stackrel{p_1}{\path} w_1 \stackrel{p_2}{\htap} w_2 \path \dots
w_{k-1} \stackrel{p_{2k}}{\htap} w_{2k},
\end{equation*}
where some paths could be empty, 
we define $\tau_z$ to be the product
\begin{equation*}
\tau_{p_1} \tau_{p_2}^{-1} \tau_{p_3} \dots \tau_{p_{2k-1}}^{-1}. 
\end{equation*}

We say that two zigzags $z_1$ and $z_2$ are parallel if they have the same
source and target. 
Given a set $\zed$ of pairs of parallel zigzags,  we will define $\left[ X,\rel,\zed;\cat \right]$
to be the full subcategory of $[X,\rel ; \cat]$ with objects  $(F,\tau)$ such
that $\tau_p=\tau_q$ for all $(p,q)\in \zed$. 

In what follows, if $E$ is an e.d. then $E\in \zed$ means that the two paths,
that one can obtain from $E$, constitute a pair in $\zed$. Thus if $E\in
\zed$ and $(F,\tau)\in [X,\rel,\zed;\cat]$, then $f_{F,\tau}(E)$ is a
commutative diagram.  

Note that if $E$ is a natural e.d. then $f_{F,\tau}(E)$ is commutative since
$\tau_r$ are natural transformations. Similarly, if $E$ is an e.d. such that
$f_{F,\tau}(E)$ is commutative, then also $f_{F,\tau}(u E v)$ and
$f_{F,\tau}(u E^t v)$  are commutative for all $u$, $v\in X^* $. 
Thus if $\ced$ is a set of critical e.d.s and $\ed$ is defined as
in~\eqref{eds}
then
\begin{equation}\label{ced=ed}
[X,\rel,\ced\sqcup \zed;\cat] = [X,\rel,\ed \sqcup \zed;\cat] 
\end{equation}
for any collection $\zed$ of pairs of parallel zigzags. 
\begin{example}
\label{full}
To avoid ambiguity we write elements in $M^k$ as $m_1|m_2|\cdots|m_k$. Then
we have a
presentation $\left\langle M,\widetilde{\rel} \right\rangle$ of $M$, with
\begin{equation*}
\widetilde{\rel} = \left\{\, \left(m_1| m_2, m_1 m_2\right) \,:\, m_1,m_2\in M \right\}
\sqcup
\{(e,\varnothing)\}.
\end{equation*}
Every path in the resulting ARS $\left\langle M^*,M^*\widetilde{\rel} M^* \right\rangle$
ends either at $m\in M^1$, $m\not=e$, or at $\varnothing$. 
Thus $\left\langle M^*,M^*\widetilde{\rel} M^* \right\rangle$ is terminating. 
Every critical e.d. for $\left\langle M,\widetilde{\rel} \right\rangle$  has one of the following forms
\begin{equation*}
\xymatrix{
m_1|m_2|m_3 \ar[r] \ar[d] & m_1|m_2m_3\ar[d] \\
m_1m_2 | m_3 \ar[r] & m_1m_2m_3
}\
\xymatrix{
m|e \ar[r]^{(m|e,m)} \ar[d]_{m|(e,\varnothing)} & m \ar@{-->}[d]\\
m \ar@{-->}[r]& m
}\
\xymatrix{
e|m \ar[r]^{(e|m,m)} \ar[d]_{(e,\varnothing)|m} & m\ar@{-->}[d] \\
m \ar@{-->}[r] & m.
}
\end{equation*}
Let us denote by $\ced$ the set of all critical e.d.s. Then, by comparing
definitions, we get $[M,\widetilde{\rel},\ced;\cat] = [M;\cat]$. 
Let $\ed$ be defined by~\eqref{eds}. Then, using~\eqref{ced=ed}, we get
\begin{equation*}
[M,\widetilde{\rel},\ed;\cat] = [M;\cat].
\end{equation*}
Let $(F,\lambda) \in [M,\widetilde{\rel},\ed;\cat]$.  By
Example~\ref{terminating}, it follows that for any two zigzags $z$,$z'\colon m_1|\cdots|m_k \to m'_1|\cdots|m'_l$
in $\left\langle M^*,M^*\widetilde{\rel} M^* \right\rangle$ one has  $f_{F,\lambda}(z) =
f_{F,\lambda}(z')$. 
\end{example}

\begin{proposition}
\label{factorize}
Let $\left\langle X,\rel \right\rangle$ be a presentation of a monoid $M$. 
 For any set $\zed$ of pairs of parallel zigzags in $\left\langle X^*,X^*\rel X^* \right\rangle$ the restriction functor $\res$ defined in
Section~\ref{action} factors via the embedding $[ X,\rel,\zed;\cat] \to
[X,\rel;\cat]$. We will denote the resulting functor $[M;\cat]\to
[X,\rel,\zed;\cat]$ by $\res_\paths$. 
\end{proposition}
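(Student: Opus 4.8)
The plan is to show that the restriction functor $\res\colon [M;\cat]\to[X,\rel;\cat]$ from Section~\ref{action} actually lands in the full subcategory $[X,\rel,\zed;\cat]$, i.e. that for every $(F,\lambda)\in[M;\cat]$ the associated object $\res(F,\lambda)=((F_x)_{x\in X},(\tau_r)_{r\in\rel})$ satisfies $\tau_p=\tau_q$ for every pair $(p,q)\in\zed$. Since $\zed$ consists of pairs of \emph{parallel} zigzags in $\left\langle X^*,X^*\rel X^*\right\rangle$, and since a morphism in $[X,\rel,\zed;\cat]$ is by definition just a morphism in $[X,\rel;\cat]$ between two such objects, once the objects are in range the functor $\res$ itself restricts and corestricts with no further data; the induced $\res_\paths$ is then automatically well defined on objects and morphisms, and commutativity of the triangle $[M;\cat]\to[X,\rel,\zed;\cat]\hookrightarrow[X,\rel;\cat]$ holds by construction.

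First I would make precise how a zigzag $z\colon w_0\zigzag w_k$ in the ARS $\left\langle X^*,X^*\rel X^*\right\rangle$ is evaluated on $(F,\lambda)$. Recall from Section~\ref{coherent} that $f_{F,\tau}$ sends a word $x_1\dots x_k$ to $F_{x_1}\dots F_{x_k}$ and a one-step rule $w_1 r w_2$ with $r=(a_1\dots a_s,b_1\dots b_t)$ to $F_{w_1}\tau_r F_{w_2}$, and that $\tau_r=\lambda_{b_1,\dots,b_t}^{-1}\lambda_{a_1,\dots,a_s}$. Composing along a path and inverting along backward steps gives $\tau_z\colon F_{w_0}\to F_{w_k}$ for any zigzag $z$. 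The content of the proposition is then that $\tau_z$ depends only on the source and target of $z$, not on $z$ itself; this is exactly the hypothesis needed to see that $\tau_p=\tau_q$ for parallel $p,q$, hence $(F,\lambda)$ defines an object of $[X,\rel,\zed;\cat]$ for \emph{any} choice of $\zed$.

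To prove that path-independence I would invoke Theorem~\ref{trm:subcat} together with the comparison built up in Example~\ref{full}. The cleanest route: the presentation $\left\langle M,\widetilde{\rel}\right\rangle$ of $M$ yields a terminating ARS $\left\langle M^*,M^*\widetilde{\rel}M^*\right\rangle$, and by Example~\ref{full} every $(F,\lambda)\in[M;\cat]=[M,\widetilde{\rel},\ed;\cat]$ has the property that $f_{F,\lambda}(z)=f_{F,\lambda}(z')$ for any two parallel zigzags in that ARS (this is where Example~\ref{terminating}, and hence Corollary~\ref{cor:iso}, does the real work). Now a relation $r=(a_1\dots a_s,b_1\dots b_t)\in\rel$ over $X$ gives, via $\phi\colon X^*\to M$, a parallel pair of zigzags between $a_1|\cdots|a_s$ and $b_1|\cdots|b_t$ in $M^*$ — pass each letter $a_i$ (resp. $b_j$) to $M^1$ and multiply down to the common element $\phi(a_1\dots a_s)=\phi(b_1\dots b_t)$ — and the induced natural isomorphism is precisely $\tau_r$. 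Chaining these, any zigzag $z$ over $X$ between $w_0$ and $w_k$ maps to a zigzag over $M$ between $\phi(w_0)$ (read as $\phi(w_0)\in M^1$, via the word $a_1|\cdots$) and $\phi(w_k)$; parallel zigzags over $X$ have $\phi(w_0)$, $\phi(w_k)$ in common, so they map to parallel zigzags over $M$, which receive equal morphisms by Example~\ref{full}. Hence $\tau_z$ is independent of $z$.

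The main obstacle is bookkeeping rather than conceptual: one must check that the translation $z\mapsto(\text{zigzag over }M)$ is compatible with the evaluation maps, i.e. that $f_{F,\lambda}$ computed in $\left\langle X^*,X^*\rel X^*\right\rangle$ agrees (under the factorizations $\lambda_{a_1,\dots,a_s}$) with $f_{F,\lambda}$ computed in $\left\langle M^*,M^*\widetilde{\rel}M^*\right\rangle$ along the translated zigzag. This is a diagram chase using the associativity coherence of the $\lambda$'s (the commuting square in the definition of an action) and the recursive definition $\lambda_{a_1,\dots,a_k}=\lambda_{a_1,a_2\cdots a_k}\circ F_{a_1}(\lambda_{a_2,\dots,a_k})$; it is routine but must be done carefully, and it is in fact the same verification that underlies Theorem~\ref{trm:subcat}. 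Once that is in hand, the factorization of $\res$ through $[X,\rel,\zed;\cat]$ and the existence and naturality of $\res_\paths$ are formal.
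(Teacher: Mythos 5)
Your proposal is correct and follows essentially the same route as the paper: replace each $\tau_r$ by the value of $f_{F,\lambda}$ on a suitable zigzag in $\left\langle M^*, M^*\widetilde{\rel}M^*\right\rangle$, so that a pair of parallel zigzags over $X$ becomes a pair of parallel zigzags over $M$, and conclude by Example~\ref{full} (which rests on Example~\ref{terminating} and Corollary~\ref{cor:iso} for the terminating ARS). One small caveat: you should not lean on Theorem~\ref{trm:subcat} here, since in the paper that theorem is proved afterwards using this very proposition; your argument in fact never needs it, only Example~\ref{full} and the translation of relations into zigzags over $M$.
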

\begin{proof}
Let $(F,\lambda) \in [M;\cat]$ and $\res(F,\lambda) = (F,\tau)\in [X,\rel,\ed;\cat]$. 
By the definition of $\tau$, every natural isomorphism $\tau_r$, $r\in \rel$, is a
value of $f_{F,\lambda}$ on a suitable zigzag in the defined above ARS
$\left\langle M^*, M^*\widetilde{\rel}M^*  \right\rangle$. 
Now let 
$(p,q) \in \zed$, with $p$, $q\colon u\zigzag w$. We have to prove that $\tau_p = \tau_q$.
Since every $\tau_r$ can be replaced by $\lambda_z$ for a suitable zigzag
$z$ in $\left\langle M^*, M^* \widetilde{\rel} M^* \right\rangle$, we see that
there are zigzags $z'$, $z'' \colon u \zigzag w$ in  $\left\langle M^*, M^* \widetilde{\rel} M^* \right\rangle$
such that $\tau_p = \lambda_{z'}$ and $\tau_q = \lambda_{z''}$. But by
Example~\ref{full}, we have $\lambda_{z'} = \lambda_{z''}$. Thus also $\tau_p =
\tau_q$. 
\end{proof}
\begin{proof}[Proof of Theorem~\ref{trm:subcat}]
The functor $\res$ is faithful as every morphism $\rho \colon (F,\lambda) \to
(G,\mu)$ in $[M;\cat]$ can be uniquely reconstructed from  its image $\nu = \res(\rho)$ by
use of  the diagrams
\begin{equation}
\label{diag}
\begin{gathered}
\xymatrix@C5em@R4em{
F_{x_1} \dots F_{x_k} \ar[r]^{\lambda_{x_1,\dots,x_k}} \ar[d]_{\nu_{x_1}\dots
\nu_{x_k}} & F_{x_1\dots x_k} \ar[d]^{\rho_{x_1\dots x_k}} \\
G_{x_1} \dots G_{x_k} \ar[r]^{\mu_{x_1,\dots,x_k}} 
 & G_{x_1\dots x_k}.
}
\end{gathered}
\end{equation}
Now we show that the functor $\res$ is full. 
Denote the images of $(F,\lambda)$ and of $(G,\mu)$ under $\res$ by $(F,\tau)$
and $(G,\sigma)$, respectively. Take a morphism $\nu\colon (F,\tau)\to
(G,\sigma)$ in $[X,\rel;\cat]$. Let $m\in M$. Then we can write $m$ as a product
of elements in $X$, say $m = x_1\dots x_k$. We define $\rho_m$ by using the
diagram~\eqref{diag}
\begin{equation*}
\rho_m = \mu_{x_1\dots x_k} \circ \nu_{x_1} \dots \nu_{x_k} \circ \lambda_{x_1\dots
x_k}^{-1}.
\end{equation*}
 We have to check that the natural transformation  $\rho_m$ is well-defined. 
Suppose $y_1\dots y_l = m $  with $y_j\in X$. As $\left\langle X,\rel
\right\rangle$ is a presentation of $M$, there is a zigzag $z\colon x_1\dots x_k
\to y_1\dots y_l$ in $\left\langle X^*, X^* \rel X^* \right\rangle$.  
Since $\nu$ is a morphism in $[X,\rel;\cat]$ we get that the diagram
\begin{equation*}
\xymatrix{
F_{x_1} \dots F_{x_k} \ar[r]^{\tau_{z}} \ar[d]_{\nu_{x_1}\dots
\nu_{x_k}} &
 F_{y_1} \dots F_{y_l}\ar[d]^{\nu_{y_1}\dots
\nu_{y_l}} \\
G_{x_1} \dots G_{x_k} \ar[r]^{\sigma_{z}} 
 & G_{y_1} \dots G_{y_l}
}
\end{equation*}
commutes.
As in the proof of Proposition~\ref{factorize}, we can find a zigzag $z'$ in
$\left\langle M^*, M^* \widetilde{\rel}M^* \right\rangle$ such that
$\lambda_{z'} = \tau_z$ and $\mu_{z'} = \sigma_z$. 
Now, consider the zigzag
\begin{equation*}
z'' \colon x_1|\dots | x_k \path m \twoheadleftarrow y_1|\dots|y_l
\end{equation*}
in $\left\langle M^*,M^*\widetilde{\rel}M^* \right\rangle$. Since $z'$ and
$z''$ have the same source and target, we get as in the proof of
Proposition~\ref{factorize}, that $\lambda_{z'} = \lambda_{z''}$ and
$\mu_{z'}= \mu_{z''}$. 
Thus we have the commutative diagram
\begin{equation*}
\xymatrix{
F_{y_1} \dots F_{y_l} \ar[r]^{\lambda_{z''} }\ar@/^6ex/[rr]^{\lambda_{y_1,\dots
,y_l}}
\ar[d]_{\nu_{y_1}\dots
\nu_{y_k}}
 & F_{x_1} \dots F_{x_k} \ar[r]^{\lambda_{x_1,\dots,x_k}} \ar[d]_{\nu_{x_1}\dots
\nu_{x_k}} & F_{x_1\dots x_k} \ar[d]^{\rho_{m}} \\
 G_{y_1} \dots G_{y_l}\ar[r]^{\mu_{z''} }\ar@/_6ex/[rr]_{\mu_{y_1, \dots
,y_l}} &
 G_{x_1} \dots G_{x_k} \ar[r]^{\mu_{x_1,\dots,x_k}} 
 & G_{x_1\dots x_k},
}
\end{equation*}
which shows that $\rho_m$ does not depend on the choice of the presentation of
$m$ in~$\left\langle X,\rel \right\rangle$.  

Now we have to check that $(\rho_m)_{m\in M}$ is a well-defined morphism in
$[M;\cat]$,i.e. that  for every $m_1$, $m_2\in M$ the diagram
\begin{equation*}
\xymatrix{
F_{m_1} F_{m_2} \ar[r]^{\lambda_{m_1,m_2}} \ar[d]_{\rho_{m_1} \rho_{m_2}} &
F_{m_1m_2} \ar[d]^{\rho_{m_1m_2}} \\
G_{m_1} G_{m_2} \ar[r]^{\mu_{m_1,m_2}}  &
G_{m_1m_2}.
}
\end{equation*}
commutes. Suppose $m_1 = x_1\dots x_k$ and $m_2 =
y_1\dots y_l$ with $x_i$, $y_j\in X$. Then since we can use the presentation
$x_1\dots x_k y_1\dots y_l$ of $m_1m_2$ to define $\rho_{m_1m_2}$, we get that
in the diagram 
\begin{equation*}
\xymatrix@C7em{
 F_{x_1} \dots F_{x_k}F_{y_1} \dots F_{y_l} \ar@/^6ex/[rr]^-{
\lambda_{x_1,\dots,x_k,y_1,\dots
,y_l}}\ar[r]^-{\lambda_{x_1,\dots,x_k}\lambda_{y_1,\dots,
,y_l}}
\ar[d]_{\nu_{x_1}\dots \nu_{x_k} \nu_{y_1} \dots \nu_{y_l}}
& F_{m_1} F_{m_2} \ar[r]^{\lambda_{m_1,m_2}} \ar[d]_{\rho_{m_1} \rho_{m_2}} &
F_{m_1m_2} \ar[d]^{\rho_{m_1m_2}} \\
 G_{x_1} \dots G_{x_k}G_{y_1} \dots G_{y_l} \ar@/_6ex/[rr]_-{
\mu_{x_1,\dots,x_k,y_1,\dots
,y_l}}\ar[r]^-{\mu_{x_1,\dots,x_k}\mu_{y_1,\dots,
,y_l}}
& G_{m_1} G_{m_2} \ar[r]^{\mu_{m_1,m_2}}  &
G_{m_1m_2},
}
\end{equation*}
the triangles commute by the definition of the $\lambda$'s, the left rectangle
commutes by the definition of $\rho_{m_1}$ and $\rho_{m_2}$, and the external
rectangle commutes by the definition of $\rho_{m_1m_2}$. Since all $\lambda$'s are
isomorphisms, we get that also the right rectangle commutes. 
This shows that $\rho$ is a well-defined morphism from $(F,\lambda)$ to
$(G,\mu)$ in $[M;\cat]$. 
\end{proof}
We say that a set $\zed$ of pairs of parallel zigzags in $\left\langle X^*,X^*\rel X^*
\right\rangle$ defines a \emph{coherent presentation} of $M$, if the functor
$\res_{\zed}\colon [M;\cat] \to [X,\rel,\zed;\cat]$ is an equivalence of
categories. 
The main result of this section is the following theorem.
\begin{theorem}\label{main:presentation}
Let $\left\langle X,\rel \right\rangle$ be a presentation of a monoid $M$ with
neutral element~$e$. Suppose that 
\begin{enumerate}[1)]
\item the transitive closure $\path$ of $X^* \rel X^*$ is
a well-founded preorder on $X^*$;
\item  there exists a well-founded monomial preorder $\succeq$ on $X^* \rel
X^*$ such that all natural e.d.s are decreasing;
\item there is a complete set $\ced$ of critical e.d.s that are decreasing with
respect to~$\succeq$.
\end{enumerate}
 Denote by
$\loops$ the collection of all pairs $(p,\varnothing_b)$, where
$b\in\attr(A)$, $\varnothing_b$ is the empty path at $b$, and $p\colon b\path
b$. Then $\res_{\ced\sqcup \loops}$ is an equivalence of categories. 
\end{theorem}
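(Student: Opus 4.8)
The plan is to show that $\res_{\ced\sqcup\loops}\colon [M;\cat]\to[X,\rel,\ced\sqcup\loops;\cat]$ is full, faithful, and essentially surjective, the first two being essentially inherited from Theorem~\ref{trm:subcat} via Proposition~\ref{factorize}, so that the real content is essential surjectivity. First I would record the easy half: by Proposition~\ref{factorize} the functor $\res_{\ced\sqcup\loops}$ is the corestriction of $\res\colon[M;\cat]\to[X,\rel;\cat]$ along the full embedding $[X,\rel,\ced\sqcup\loops;\cat]\hookrightarrow[X,\rel;\cat]$. Since $\res$ is full and faithful by Theorem~\ref{trm:subcat}, and a corestriction of a full-and-faithful functor along a full embedding is again full and faithful, $\res_{\ced\sqcup\loops}$ is full and faithful. (One should also note, using \eqref{ced=ed}, that $[X,\rel,\ced\sqcup\loops;\cat]=[X,\rel,\ed\sqcup\loops;\cat]$ where $\ed$ is the complete set of e.d.s from \eqref{eds}, so that hypotheses (1)--(3) combined with Proposition~\ref{decreasinged} give us a well-founded preorder on $X^*\rel X^*$ for which $\ed$ is a complete set of decreasing e.d.s, and by hypothesis (1) the relation $\path$ is a well-founded preorder on $X^*$ — these are exactly the hypotheses of Corollary~\ref{cor:iso}.)

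The core of the argument is essential surjectivity: given $(F,\tau)\in[X,\rel,\ced\sqcup\loops;\cat]$, I must build $(G,\lambda)\in[M;\cat]$ with $\res_{\ced\sqcup\loops}(G,\lambda)\cong(F,\tau)$. The idea is to use the map of graphs $f_{F,\tau}\colon\left\langle X^*,X^*\rel X^*\right\rangle\to\End(\cat)$ together with Corollary~\ref{cor:iso}. Indeed: each $\tau_\rho$, $\rho\in X^*\rel X^*$, is an isomorphism in $\End(\cat)$; each $E\in\ed$ has $f_{F,\tau}(E)$ commutative (natural e.d.s are automatically commutative by naturality of the $\tau_r$; for $\ced$ this holds because $(F,\tau)\in[X,\rel,\ced;\cat]$; and the remaining pieces $uEv$, $uE^tv$ are commutative whenever $E$ is, as noted before \eqref{ced=ed}); and for $b\in\attr(X^*)$ every loop $p\colon b\path b$ has $f_{F,\tau}(p)=1_{F_b}$ precisely because $(p,\varnothing_b)\in\loops$. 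So Corollary~\ref{cor:iso} applies: for any two zigzags $z,z'\colon w\zigzag w'$ in $\left\langle X^*,X^*\rel X^*\right\rangle$ we have $f_{F,\tau}(z)=f_{F,\tau}(z')$. Now for $m\in M$ choose a word $w\in X^*$ with $\phi(w)=m$ and set $G_m:=F_w$; since any two such words are joined by a zigzag, $G_m$ is independent of the choice up to the canonical isomorphism $f_{F,\tau}(z)$, and these canonical isomorphisms are coherent (compose correctly, and the identity zigzag gives the identity) by the Corollary. Define $\lambda_{m,n}\colon G_mG_n\to G_{mn}$ by picking words $u,v$ with $\phi(u)=m$, $\phi(v)=n$: then $G_mG_n=F_uF_v=F_{uv}$ and $\phi(uv)=mn$, so $F_{uv}=G_{mn}$ up to the canonical iso; set $\lambda_{m,n}$ to be that canonical iso (equivalently, the image under $f_{F,\tau}$ of the trivial/empty zigzag viewed appropriately, followed by the identification). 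Independence of choices and the pentagon axiom both reduce to equalities of the form $f_{F,\tau}(z)=f_{F,\tau}(z')$ for parallel zigzags, hence hold by Corollary~\ref{cor:iso}; set $\eta$ from the empty word. On morphisms, functoriality of $G$ is immediate since $G_m$ is literally $F_w$. Finally one checks $\res_{\ced\sqcup\loops}(G,\lambda)\cong(F,\tau)$: the functors $G_x$ for $x\in X$ can be taken to be exactly $F_x$ (choosing the length-one word $x$ for $\phi(x)$), and then unwinding the definition of $\res$ recovers the original $\tau_r$ up to the canonical isomorphisms, giving the required isomorphism in $[X,\rel,\ced\sqcup\loops;\cat]$; strictly one gets an object isomorphic to $(F,\tau)$, which suffices for essential surjectivity.

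The main obstacle is the bookkeeping in the essential-surjectivity step: one must check that all the structural equations for $(G,\lambda)$ — well-definedness of $G_m$ and $\lambda_{m,n}$ under change of representing word, the associativity pentagon, and the unit conditions on $\lambda_{e,a},\lambda_{a,e}$ — all genuinely reduce to the single statement ``parallel zigzags have equal $f_{F,\tau}$-image,'' which is exactly what Corollary~\ref{cor:iso} delivers. This is the same pattern already used in the proof of Theorem~\ref{trm:subcat} and in Proposition~\ref{factorize}, where zigzags in $\left\langle M^*,M^*\widetilde\rel M^*\right\rangle$ are compared; here the only difference is that we produce the new action from scratch rather than comparing two given ones, so the verification is slightly longer but entirely routine. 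A small point to be careful about: the canonical isomorphisms $f_{F,\tau}(z)\colon F_w\to F_{w'}$ must be used consistently as the transition data defining $G$, so that ``$G_m=F_w$'' is really shorthand for a choice of representative plus a coherent system of identifications; once this is set up, every diagram one needs to commute is the image under $f_{F,\tau}$ of a commuting diagram of zigzags, and commutativity follows.
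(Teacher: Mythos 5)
Your proposal is correct and follows essentially the same route as the paper: reduce via \eqref{ced=ed} to $\ed\sqcup\loops$, inherit full faithfulness from Theorem~\ref{trm:subcat} and Proposition~\ref{factorize}, and prove density by choosing a representing word for each $m\in M$, defining $G_m=F_w$ and $\lambda_{m,m'}$ as $\tau_\zeta$ for a connecting zigzag, with all well-definedness and coherence checks collapsing to Corollary~\ref{cor:iso}. The only cosmetic difference is that the paper arranges the chosen presentation of each $x\in X$ to be $x$ itself so that $\res(G,\lambda)$ equals $(F,\tau)$ on the nose, whereas you settle for an isomorphism, which is equally sufficient.
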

\begin{proof}
Let $\ed$ be the collection of e.d.s defined by \eqref{eds}.  In view of \eqref{ced=ed}, it is enough to
show that
\begin{equation*}
\res_{\ed\sqcup \loops}\colon [M;\cat] \to [X,\rel, \ed\sqcup \loops; \cat]
\end{equation*}
is an equivalence of categories. 
Since $\res_{\ed\sqcup \loops}$ is fully faithful, it is enough to check that it
is a dense functor. 
Let $(F,\tau)\in [X,\rel,\ed\sqcup \loops ;\cat]$. Then
\begin{equation*}
f_{F,\tau} \colon \left\langle X^*,X^* \rel X^* \right\rangle \to \End(\cat)
\end{equation*}
satisfies conditions of Corollary~\ref{cor:iso}. 
In particular, for any two zigzags $z$, $z'\colon u\zigzag v$ in $\left\langle X^*, X^* \rel X^*
\right\rangle$ we have $\tau_z = \tau_{z'}$. 

Now, for every $m$ in $M$ we choose a presentation $x_1\dots x_k$ of $m$ in
$\left\langle X,\rel \right\rangle$. We will assume that if $m\in X$, then its
chosen presentation is $m$ itself. 
We define $G_m = F_{x_1}\dots F_{x_k}$ using the above chosen presentation.
Now, let $m'\in M$ and $y_1\dots y_l$ be its chosen presentation. Then
$x_1\dots x_k y_1\dots y_l$ is a presentation of $m m'$, which, in general, is 
different of the chosen presentation, say, $z_1\dots z_n$ of $mm'$. 
Since $\left\langle X,\rel \right\rangle$ is a presentation of $M$, we get that
there is zigzag $\zeta\colon x_1\dots x_k y_1 \dots y_l \zigzag z_1 \dots z_n$
in $\left\langle X^*, X^* \rel X^* \right\rangle$. We define $\lambda_{m,m'} =
\tau_{\zeta}$. As we already mentioned the resulting natural isomorphism is independent
of the choice of $\zeta$.
Now, if $m''\in M$, the compositions
\begin{equation}
\begin{aligned}
\label{comp}
& G_m G_{m'} G_{m''} \xrightarrow{G_m \lambda_{m',m''}} G_m G_{m'm''}
\xrightarrow{\lambda_{m,m'm''}} G_{mm'm''}\\
& G_m G_{m'} G_{m''} \xrightarrow{ \lambda_{m,m'}G_{m''}}  G_{mm'}G_{m''}
\xrightarrow{\lambda_{mm',m''}} G_{mm'm''}
\end{aligned}
\end{equation}
are equal to $\tau_{\zeta'}$ and $\tau_{\zeta''}$ for suitable parallel zigzags
$\zeta'$, $\zeta''$ in
$\left\langle X*,X^*\rel X^* \right\rangle$.
Thus the natural transformations \eqref{comp} are equal.
Hence we get that  $(G,\lambda)$ is an object of $[M;\cat]$. 

We have to check that $\res(G,\lambda) = (F,\tau)$. Since for every $x\in X$, we
have $G_x = F_x$, we get $\res(G,\lambda) = (F,\sigma)$. Thus, we have only to
check that $\sigma = \tau$. For every $r = (u,v)\in \rel$, we defined $\sigma_r$ as
$\lambda_z$ for  a suitable zigzag $z$ in $\left\langle M^*, M^*
\widetilde{\rel} M^* \right\rangle$. Since every $\lambda_{m,m'}$ is of the form
$\tau_{z'}$ for a zigzag $z'$ in $\left\langle X^*, X^* \rel X^* \right\rangle$,
we get that there is a zigzag $\zeta\colon u \zigzag v$ in $\left\langle X^*,
X^*\rel X^*
\right\rangle$ such that $\lambda_z = \tau_\zeta$. But now $\tau_\zeta =
\tau_r$. This shows that $\sigma_r = \tau_r$.  
\end{proof} 
\begin{example}
\label{knuth}
Suppose $\left\langle X^*, X^* \rel X^* \right\rangle$ is terminating and
$\rel$ satisfies the Knuth-Bendix condition. Let $\ced$ be a complete set of
critical e.d.s. Then, in view of Example~\ref{terminating}, we get that
$\res_\ced $  is an equivalence of categories. Thus one of the ways to find a
coherent presentation of a monoid $M$ is to perform the Knuth-Bendix completion
procedure on a given presentation. Unfortunately, this process can either not
to finish or to lead to an enormous coherent presentation which is not useful for practical
purposes.  
\end{example}

\section{Coherent presentation for the  $0$-Hecke monoid}
\label{hecke}

The $0$-Hecke monoid $\rs{n+1}$ is defined by the following presentation
\begin{equation*}
\begin{aligned}
& X = \left\{ T_1,\dots, T_n \right\}\\[3ex]
&{\rel'}=\left\{ a_{i}\,\middle|\, 1\le i\le n \right\}\sqcup \left\{\, b_{i+1}
\,\middle|\, 1\le i\le n-1 \right\} \sqcup \left\{\, c_{ji} \,\middle|\,
 1 \le i \le j-2 \le n-2
\right\}, 
\end{aligned}
\end{equation*}
where
\begin{equation*}
a_{i} = (T_iT_i, T_i) ,\quad b_{i+1} = (T_{i+1}T_i T_{i+1}, T_i
T_{i+1}T_i),\quad c_{ji} = (T_{j}T_i , T_i T_j).
\end{equation*}
In this section we find a coherent presentation of $\rs{n+1}$ that extends the
above presentation of $\rs{n+1}$. 
It is easy to see that $\left\langle X^*,X^* {\rel'} X^* \right\rangle$
is not (locally) confluent.
We  denote by $\rel''$ the set $\rel'\sqcup \left\{\, c_{ij} \,\middle|\,
i\le j-2
\right\}$, where $c_{ij} = (T_iT_j, T_j T_i)$. 
It is well-know that $\left\langle X^*, X^* \rel'' X^* \right\rangle$ is
confluent.  

In the diagrams below we will write $i$ in place of $T_i$. 
We will also use ${}'$ for decrement and $\,\widehat{\ }\,$ for increment, thus for an integer
$k$
\begin{equation*}
k' = k-1, \quad \widehat{k} = k+1.
\end{equation*}
We will show that the set $\paths$ of  pairs of paths  in
$\left\langle X^*, X^*\rel'' X^* \right\rangle$ defined below gives a coherent
presentation of $\rs{n+1}$, i.e. that 
\begin{equation*}
\res_{\paths}\colon [\rs{n+1}; \cat]
\to [ X,\rel'', \paths ; \cat] 
\end{equation*}
is an
equivalence of categories. 
The set $\paths$ consists of the pairs of paths that one obtains from the
following diagrams
\begin{equation}
\label{loops}
\begin{gathered}
\xymatrix{
st \ar[r]^{c_{st}} \ar@/_3ex/@{-->}[rr] & ts \ar[r]^{c_{ts}} & st
}
\end{gathered}
\end{equation}

\begin{equation}
\label{aa}
\begin{gathered}
\xymatrix{
kkk \ar[r]^{ka_k} \ar[d]_{a_k k} & kk\ar[d]^{a_k}\\
kk \ar[r]^{a_k} & k
}
\end{gathered}
\end{equation}
\begin{equation}
\label{ba}
\begin{gathered}
\xymatrix@C4em{
kk'kk \ar[rr]^-{kk'a_k} \ar[d]_{b_kk} && kk'k \ar[d]^{b_k} \\
k'kk'k \ar[r]^-{k'b_k} & k'k'kk' \ar[r]^{a_{k'}k k'}  & k'kk'
}
\end{gathered}
\end{equation}
\begin{equation}
\label{ab}
\begin{gathered}
\xymatrix{
kkk'k \ar[r]^{kb_k} \ar[dd]_{a_kk'k} & kk'kk'\ar[d]^{b_kk'} \\
& k'kk'k' \ar[d]^{k'ka_{k'}} \\
kk'k \ar[r]^{b_k} & k'kk'
}
\end{gathered}
\end{equation}

\begin{equation}
\label{ac}
\begin{gathered}
\xymatrix{
stt \ar[rr]^-{sa_t}  \ar[d]_{c_{st}t} && st \ar[d]^{c_{st}}\\
tst \ar[r]^-{tc_{st}} & tt s \ar[r]^{a_t} & ts
}
\end{gathered}
\end{equation}

\begin{equation}
\label{bb}
\begin{gathered}
\xymatrix{
kk'kk'k  \ar[d]_{b_k k'k} \ar[r]^-{kk'b_k} & kk'k'kk' \ar[r]^{ka_{k'} kk'} & kk'kk' \ar[d]^{b_k k'} \\
k'kk'k'k \ar[d]_{k'ka_{k'} k} && k'kk'k' \ar[d]^{k'k a_{k'}}  \\
k'kk'k \ar[r]^-{k'b_k} & k'k'kk' \ar[r]^{a_{k'}k k'} & k'kk'
}
\end{gathered}
\end{equation}

\begin{equation}
\label{bc}
\begin{gathered}
\xymatrix{
tss's \ar[r]^-{tb_s} \ar[d]_{c_{ts} s's} & t s'ss' \ar[r]^-{c_{ts'}ss'} & s'tss'
\ar[d]^{s'c_{ts} s'} \\
sts's \ar[d]_{sc_{ts'} s} && s'sts' \ar[d]^{s's c_{ts'}} \\
ss'ts \ar[r]^{ss'c_{ts} } & ss'st \ar[r]^{b_s t} & s'ss't
}
\end{gathered}
\end{equation}

\begin{equation}
\label{cc}
\begin{gathered}
\xymatrix{
kji \ar[r]^-{kc_{ji}} \ar[d]_{c_{kj}i} &  kij \ar[r]^{c_{ki}j} & ikj
\ar[d]^{ic_{kj}} \\
jki \ar[r]^-{jc_{ki}} & jik \ar[r]^{c_{ji}k} & ijk
}
\end{gathered}
\end{equation}

\begin{equation}
\label{TZ}
\begin{gathered}
\xymatrix@C3.4em{
\widehat{k}kk'\widehat{k}k\widehat{k} \ar[r]^-{\widehat{k}kk'b_{\widehat{k}}} \ar[d]_{\widehat{k}kc_{k'\widehat{k}} k\widehat{k}} & 
\widehat{k}kk'k\widehat{k}k \ar[r]^-{ \widehat{k}b_k \widehat{k}k} & \widehat{k}k'kk' \widehat{k}k \ar[r]^-{c_{\widehat{k}k'} kk'\widehat{k}k} & k'\widehat{k}kk'\widehat{k}k
\ar[d]^{k'\widehat{k}kc_{k'\widehat{k}}k} \\
\widehat{k}k\widehat{k}k'k\widehat{k}  \ar[d]_{b_{\widehat{k}} k'k\widehat{k}} 
&&& k'\widehat{k}k\widehat{k}k'k \ar[d]^{k'b_{\widehat{k}} k'k} \\
k\widehat{k}kk'k\widehat{k} \ar[d]_{k\widehat{k} b_k \widehat{k}} 
&&& k'k\widehat{k}kk'k \ar[d]^{k'k\widehat{k} b_k} \\
k\widehat{k}k'kk'\widehat{k} \ar[d]_{kc_{\widehat{k}k'} kk'\widehat{k}} 
&&& k'k\widehat{k}k'kk' \ar[d]^{k'kc_{\widehat{k}k'} kk'} \\
kk'\widehat{k}kk'\widehat{k} \ar[r]^-{kk'\widehat{k}kc_{k'\widehat{k}}} & kk'\widehat{k}k\widehat{k}k' \ar[r]^-{kk'b_{\widehat{k}}k'} &
kk'k\widehat{k}kk' \ar[r]^-{ b_k \widehat{k}kk'} & k'kk'\widehat{k}kk'.
}
\end{gathered}\,\,\,\,\,\,
\end{equation}
The diagrams \eqref{cc} and \eqref{TZ} were called
\emph{Tits-Zamolodchikov $3$-cells} in~\cite{braid}. 

We will say that a path $p$ in $\left\langle X^*, X^* \rel'' X^* \right\rangle$
is a \emph{$c$-path} if all the steps in $p$ are of the form $X^* c_{st}X^*$ for
some $s$, $t$ such that $|s-t| \ge 2$. 

\begin{proposition}
\label{cpath}
Suppose $(F,\tau) \in \left[ X,\rel'', \paths; \cat \right]$. 
If $p$, $p'\colon u\path v$ are two $c$-paths in $\left\langle X^*, X^*\rel'' X^*
\right\rangle$, then $\tau_{p} = \tau_{p'}$. 
\end{proposition}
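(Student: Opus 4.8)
The plan is to reduce the statement about arbitrary $c$-paths to the two basic mechanisms that make $c$-steps commute: the loops \eqref{loops} (which say $\tau_{c_{st}}$ followed by $\tau_{c_{ts}}$ is the identity, hence $\tau_{c_{st}}$ is invertible with inverse $\tau_{c_{ts}}$) and the cell \eqref{cc} together with the naturality e.d.s $\ned(c_{ji},w,c_{lk})$ (which handle how two $c$-steps acting on disjoint or nested parts of a word can be swapped). My strategy is to apply Corollary~\ref{cor:iso} to a suitably restricted abstract reduction system rather than argue by hand along paths.

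\medskip

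\noindent First I would introduce the sub-ARS $\left\langle X^*, \mathbf{C} \right\rangle$ where $\mathbf{C}\subset X^*\rel'' X^*$ consists exactly of the steps $w_1 c_{st} w_2$ with $|s-t|\ge 2$; a $c$-path in the sense of the proposition is then precisely a path in this ARS. I would equip $\mathbf{C}$ with the trivial preorder in which all elements are equivalent (so $\succeq$ is $\mathbf{C}\times\mathbf{C}$); this is vacuously well-founded because every descending sequence stabilises immediately. The next step is to check that $\left\langle X^*,\mathbf{C}\right\rangle$ is locally confluent and to exhibit a complete set $\ed_{\mathbf C}$ of decreasing e.d.s for it: the proper e.d.s are of two kinds. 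If the two $c$-steps out of a word act on overlapping letter positions they must involve three consecutive letters $k>j>i$ with all gaps $\ge 2$, and the e.d. closing that corner is the cell \eqref{cc}; all other corners are closed by a natural e.d. $\ned(c_{ji},w,c_{lk})$ or its transpose, sitting inside $X^*\ned X^*\sqcup X^*\ned^t X^*$. Since the preorder is trivial, every such e.d. is automatically decreasing (both conditions in the definition hold with $j=n$, $s=m$, reading every $\sim$ as equality of equivalence classes). Restricting $f_{F,\tau}$ to $\left\langle X^*,\mathbf{C}\right\rangle$, hypothesis (i) of Corollary~\ref{cor:iso} on this ARS amounts to: $f_{F,\tau}$ sends each e.d. in $\ed_{\mathbf C}$ to a commutative diagram — for the natural e.d.s this is automatic since $\tau_r$ are natural transformations, and for \eqref{cc} it holds because $\eqref{cc}\in\paths$ and $(F,\tau)\in[X,\rel'',\paths;\cat]$. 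Hypothesis (iii) holds because every $\tau_{c_{st}}$ is an isomorphism, its inverse being $\tau_{c_{ts}}$ by \eqref{loops}. Hypothesis (iv), concerning loops $b\path b$ at semi-normal $b$ of the $c$-ARS, follows from the loops \eqref{loops}: a loop in $\left\langle X^*,\mathbf C\right\rangle$ is a product of $\tau_{c_{st}}$'s returning to the start, and using \eqref{loops} repeatedly to cancel adjacent $\tau_{c_{st}}\tau_{c_{ts}}$ (after commuting steps on disjoint positions, which is exactly the content of the natural e.d.s already shown commutative) collapses it to the identity.

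\medskip

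\noindent Once all four hypotheses of Corollary~\ref{cor:iso} are verified for $f_{F,\tau}$ restricted to $\left\langle X^*,\mathbf{C}\right\rangle$, the conclusion is immediate: for any two $c$-paths $p,p'\colon u\path v$, these are in particular parallel zigzags in $\left\langle X^*,\mathbf{C}\right\rangle$, so $\tau_p=\tau_{p'}$. Note that hypothesis (i) of Corollary~\ref{cor:iso}, namely that $\path$ restricted to $\mathbf C$ is well-founded, holds because it is a restriction of the well-founded $\path$ on $X^*$ from hypothesis~1 of Theorem~\ref{main:presentation} (and in any case $c$-steps strictly shorten no word but strictly decrease a fixed monomial order on $X^*$, e.g. the one sorting larger indices to the right).

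\medskip

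\noindent The main obstacle I anticipate is the careful verification that $\left\langle X^*,\mathbf{C}\right\rangle$ is locally confluent with the claimed complete set of e.d.s: one must enumerate the overlap patterns of two applicable $c_{st}$-steps and confirm that the only genuinely overlapping critical pair is the three-letter one resolved by \eqref{cc}, while every other pair is a ``naturality'' configuration on disjoint position-blocks. This is a routine but slightly fiddly case analysis on positions of the at-most-four letters involved; the conceptual content is entirely carried by \eqref{loops} and \eqref{cc}. A secondary subtlety is confirming that no loop at a $c$-normal form can do anything other than permute and return via $\tau_{c_{st}}$'s, which again reduces to the disjointness/naturality bookkeeping plus \eqref{loops}.
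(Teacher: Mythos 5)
There is a genuine gap, and it sits exactly at the point your plan leans on hardest: the claim that, with the trivial preorder on $\mathbf{C}$ (all steps equivalent), ``every such e.d.\ is automatically decreasing.'' Unwind the definition of a decreasing e.d.\ when $\sim$ identifies everything: the relation $\succ$ is then empty, so in condition 1) the only way to satisfy (ii) and (iii) is to have $j=n$ and $n\le 1$, and likewise $m\le 1$ in condition 2). In other words, under the trivial preorder an e.d.\ is decreasing only if each convergence path has at most one arrow. The cell \eqref{cc} has two arrows on each of its convergence paths, so it is \emph{not} decreasing for your preorder, and Theorem~\ref{geometric}/Corollary~\ref{cor:iso} do not apply. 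This is not a technicality you can wave away: your sub-ARS $\left\langle X^*,\mathbf{C}\right\rangle$ contains both orientations $c_{st}$ and $c_{ts}$ and is therefore non-terminating, which is precisely the regime where local confluence alone fails to give the tiling property and where decreasingness has to be earned. A secondary weak point is hypothesis (iv): your argument that every loop at a semi-normal form collapses to the identity ``by commuting disjoint steps and cancelling adjacent $\tau_{c_{st}}\tau_{c_{ts}}$'' is essentially the statement being proved, restated. (Your well-foundedness justifications are also both off --- a restriction of a well-founded preorder to a sub-ARS need not stay well-founded for the sub-ARS's own $\bidir$, and no monomial order decreases along both $c_{st}$ and $c_{ts}$ --- though the conclusion happens to hold because $c$-steps preserve length.)

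The paper's proof sidesteps all of this with one extra move that your proposal is missing. Let $\relc\subset\rel''$ consist only of the one-way rules $c_{ji}$ with $j\ge i+2$. By \eqref{loops}, $\tau_{c_{ij}}=\tau_{c_{ji}}^{-1}$, so any $c$-path (which may use both orientations) induces the same natural transformation as a \emph{zigzag} in the oriented ARS $\left\langle X^*,X^*\relc X^*\right\rangle$. That ARS is terminating and locally confluent, with \eqref{cc} (for $k\ge j+2$, $j\ge i+2$) as the only critical e.d.s; Example~\ref{terminating} then applies verbatim: in the terminating case all e.d.s can be made decreasing, loops at normal forms are empty, and $\tau_z=\tau_{z'}$ for parallel zigzags follows from Corollary~\ref{cor:iso}. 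So the correct role of \eqref{loops} is not to certify invertibility inside a two-way rewriting system, but to convert your paths into zigzags of a one-way, terminating system where the decreasing-diagram hypotheses are free. If you reorganise your argument around that orientation step, the rest of your outline (naturality e.d.s plus \eqref{cc} as the complete set) goes through.
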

\begin{proof}
Let $\relc$ be the subset $\rel''$ consisting of all $c_{ji}$ with $j\ge i+2$.
Using~\eqref{loops}, we see that $\tau_{c_{ij}} = \tau_{c_{ji}}^{-1}$ for all
$j\ge i+2$. Therefore, there are zigzags $z$, $z'$ in $\left\langle X^*, X^*
\relc X^*
\right\rangle$ such that $\tau_{p} = \tau_z$ and $\tau_{p'} = \tau_{z'}$.

The ARS $\left\langle X^*, X^* \relc X^* \right\rangle$ is terminating and
locally confluent, with the only critical e.d.s given by~\eqref{cc} with
$k\ge j+2$, $j\ge i+2$. Therefore, by Example~\ref{terminating}, we get that
$\tau_{z} = \tau_{z'}$. 
\end{proof}
The easiest way to prove our result would be to have a complete set of
decreasing critical e.d.s for $\left\langle X^*, X^*\rel'' X^* \right\rangle$.
The author does not know at the moment if this is possible. Instead, we will
proceed as follows
\begin{enumerate}[1)]
\item first we replace $\rel''$ with a bigger set $\rel$ of generating rules;
\item then we define a preorder on  $X^* \rel X^*$ so that there is a complete
set of decreasing critical e.d.s for $\left\langle X, \rel \right\rangle$ and
all natural e.d.s are decreasing;
\item further we show that all the chosen critical e.d.s can be subdivided into
diagrams in $\paths$;
\item finally we show that for all $(F,\tau)\in [X,\rel, \paths;\cat]$ and any
attractor loop $p\colon b\path b$ one has $\tau_p = 1_{F_b}$. 
\end{enumerate}
Let
\begin{equation*}
\rel = \left\{\, a_{i} \,\middle|\,  1\le i \le n \right\} \sqcup 
\left\{\, b_{ji} \,\middle|\,  1\le i <j \le n \right\} \sqcup \left\{\,
c_{st}
\,\middle|\, |s-t| \ge 2 \right\}.
\end{equation*}
Here $a_i$, $c_{st}$ are defined as before and 
\begin{equation*}
b_{ji} = (j\dots ij,\, j'jj'\dots i), 
\end{equation*}
where $j\dots i$ denotes the interval of the arithmetic progression with the step
$-1$. 
Note that in particular $b_{jj'} = b_j$, and therefore $\rel''\subset \rel$.
We define the preorder $\succeq$ on $X^* \rel X^*$ as follows. Given 
$r$, $r'$, we write $r \gg r'$ to indicate that $urv \succ u'r'v'$ for all
$u$, $v$, $u'$, $v'\in X^*$. Moreover, for example, $c \gg a$ will indicate that
$c_{st} \gg a_i$ for all $s$, $t$, and $i$.  We will write
$\#_j u$ for the number of occurrences of $j$ in $u\in X^*$. 
The preorder $\succeq$ is defined by the rules
\begin{enumerate}[$1)$]
\item  for $i\le j''$, $b_{ji} \sim b_{jj'} j''\dots i$, i.e. to compare two elements in $X^*\rel X^*$,
we first replace, if necessary, all $b_{ji}$ by $b_{jj'}j''\dots i$, and then proceed with
the rules below;
\item we order the relations in $\rel$ by 
\begin{enumerate}[i)]
\item $b \gg c\gg a$;
\item $b_{i+1,i} \gg b_{i,i-1} $;
\item $c_{ij} \gg c_{ts}$ for all $j\ge i+2$ and $t\ge s+2$;
\item for $k\ge j+2$ and $t\ge s+2$, $c_{kj} \gg c_{ts}$, if $k> t$ or
$k = t$ and $j > s $; 
\item  $\left\{\, c_{ij} \,\middle|\,  j\ge i+2 \right\}$ are ordered arbitrary;
\end{enumerate}
\item if words contain  the same generating rule $r\in \rel$ we proceed as
follows:
\begin{enumerate}[i)]
\item if $r=a_i$, then we just compare lengths of the words: the
longer word is greater; 
\item if $r=c_{ji}$ with $j\ge i+2$, then $u c_{ji} v \succ u' c_{ji} v'$ if
\begin{enumerate}[a)]
\item $\sum\limits_{k\ge j} \#_k u > \sum\limits_{k\ge j} \#_k u'$; \vspace{2ex}
\item $\sum\limits_{k\ge j} \#_k u = \sum\limits_{k\ge j} \#_k u'$
and $\sum\limits_{k\le i} \#_k v > \sum\limits_{k\le i} \#_k v'$. \vspace{3ex}
\end{enumerate}
\item if  $r=b_{kk'}$, then  $ub_{kk'} v \succ u'b_{kk'} v'$ 
 if 
\begin{equation*}
(\#_k uv , \#_{k'} u v, \dots, \#_1 u  v) >
(\#_k u'  v' , \#_{k'} u'  v', \dots, \#_1 u' v') 
\end{equation*}
with respect to the lexicographical order on $\N^k$. 
\end{enumerate}
\end{enumerate}
If for two words $w$, $w'\in X^*\rel X^*$ we cannot conclude either $w\succ w'$
or $w'\succ w$, according to the above rules, then $w\sim w'$. 
It is obvious that the preorder $\succeq$ is monomial. 
\begin{proposition}
All the natural e.d.s of the ARS $\left\langle X^*, X^*\rel X^* \right\rangle$
are decreasing with respect to the preorder $\succeq$. 
\end{proposition}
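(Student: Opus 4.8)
The plan is to specialise the definition of a decreasing e.d.\ to the very particular shape of a natural e.d., and then to settle the claim by a short case analysis on the types of the two generating rules $r,r'\in\rel$ occurring in $\ned(r,w,r')$.

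The four sides of $\ned(r,w,r')$ are the top arrow $u=r\,w\,s(r')$, the left arrow $l=s(r)\,w\,r'$, the bottom arrow $d_1=r\,w\,t(r')$ and the right arrow $r_1=t(r)\,w\,r'$.  Here the bottom and right paths each consist of a single step, so in the definition of a decreasing e.d.\ one has $m=n=1$, and conditions~1) and~2) collapse to two requirements: (A)~$u\succeq d_1$ or $l\succ d_1$; and (B)~$l\succeq r_1$ or $u\succ r_1$.  The key observation is that $u$ and $d_1$ carry the \emph{same} generating rule $r$ and differ only by the replacement of $s(r')$ by $t(r')$ in a region of the word disjoint from that occurrence of $r$; hence they are compared by clause~3) of the definition of $\succeq$.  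Symmetrically, $l$ and $r_1$ carry the generating rule $r'$ and differ only by replacing $s(r)$ by $t(r)$ in their common context.  So (A) asks whether applying $r'$ elsewhere can strictly raise the $\succeq$-statistic attached to $r$, and (B) is the mirror question; in every such comparison the block $w$ and the untouched letters cancel, so $w$ plays no role.

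For (A) I would split on the type of $r$, using two elementary facts: $s(\rho)$ and $t(\rho)$ have equal length for every $\rho\in\rel$ except $\rho=a_i$ (where it drops by one), and $s(c_{st})$, $t(c_{st})$ use the same letters with the same multiplicities.  If $r=a_i$, two elements carrying $r$ are compared by word length, whence $u\succeq d_1$.  If $r=c_{ij}$ with $j\ge i+2$, clause~3) provides no refinement and $u\sim d_1$.  If $r=c_{ji}$ with $j\ge i+2$, the comparison of $u$ with $d_1$ reduces (their left contexts being empty) to comparing $\sum_{k\le i}\#_k$ of the right contexts, and this is at least as large for $u$ as for $d_1$ in every case \emph{except} $r'=b_{pq}$ with $i=p-1$; in that one sub-case $d_1\succ u$, but then $d_1$ carries a $c$-rule and $l$ the $b$-rule $b_{pq}$, so $b\gg c$ forces $l\succ d_1$.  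If $r=b_{ji}$, one first normalises by $b_{ji}\sim b_{jj'}j''\dots i$, and then $u$ and $d_1$ are compared lexicographically through the tuples $(\#_j,\dots,\#_1)$ prescribed by the definition of $\succeq$; replacing $s(r')$ by $t(r')$ never makes the tuple of $d_1$ larger than that of $u$, \emph{except} when $r'=b_{pq}$ with $p=j+1$, where the coordinate $\#_j$ is strictly larger for $d_1$; but there the normalised generating rules of $l$ and $d_1$ are $b_{j+1,j}$ and $b_{j,j-1}$, and $b_{j+1,j}\gg b_{j,j-1}$ again yields $l\succ d_1$.  Requirement (B) is checked in exactly the same fashion with $r$ and $r'$, and the left and right contexts, exchanged; the one fallback it needs is $u\succ r_1$ in the sub-case $r=b_{ji}$, $r'=b_{pq}$, $j=p+1$, once more supplied by $b_{p+1,p}\gg b_{p,p-1}$.

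The proof is thus a finite bookkeeping over the nine combinations of types of $r$ and $r'$.  The step I expect to be the main obstacle is the case $r,r'\in\{b_{ji}\}$: one must carry the normalisation $b_{ji}\sim b_{jj'}j''\dots i$ of clause~1) through every comparison, keep track of precisely which coordinate of the tuple $(\#_j,\dots,\#_1)$ the change of context lands in, and notice that in the two boundary sub-cases ($p=j+1$ for (A) and $j=p+1$ for (B)) it is exactly the chain $b_{n,n-1}\gg\cdots\gg b_{2,1}$ of clause~2) that restores decreasingness.  All combinations involving an $a$-rule or a pair of $c$-rules are immediate from the two elementary facts recorded above.
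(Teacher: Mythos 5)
Your proposal is correct and follows essentially the same route as the paper's proof: reduce decreasingness of the square $\ned(r,w,r')$ to the two disjunctions $rws(r')\succeq rwt(r')$ or $s(r)wr'\succ rwt(r')$, and $s(r)wr'\succeq t(r)wr'$ or $rws(r')\succ t(r)wr'$, then case-analyze on the types of $r$ and $r'$ using the length and letter-count statistics, with the chains $b\gg c$ and $b_{i+1,i}\gg b_{i,i-1}$ supplying exactly the fallbacks you identify. The exceptional sub-cases you single out ($r'=b_{i+1,q}$ against $r=c_{ji}$, and $r'=b_{j+1,q}$ against $r=b_{ji}$, with the mirror case for the right-hand side) are precisely the ones treated in the paper.
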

\begin{proof}
Let $r,r'\in \rel$ and $w\in X^*$. Then $\ned(r,w,r')$ has the top arrow
$rws(r')$,  the bottom arrow $rwt(r')$, the left arrow $s(r)wr'$, and the right
arrow $t(r)wr'$. 
If $rws(r')\succeq rwt(r')$ and $s(r)wr'\succeq t(r)wr'$ then the e.d.
$\ned(r,w,r')$ is obviously decreasing. 

Thus, we have to find triples $(r,w,r')$ for which $rwt(r')\succ rws(r')$ or
$t(r)wr' \succ s(r)wr'$. 

First we identify triples $(r,w,r')$ such that $rwt(r') \succ rws(r')$. 
If $r$ is of type $a$, then $rwt(r')\succ rws(r')$ if and only if the length of
$t(r')$ is greater than the length of $s(r')$. But there is no rule $r'\in \rel$
with such property. Thus $r$ cannot be of type $a$. 

Now, suppose $r$ if of type $c$. We have two cases: either $r=c_{ij}$ or
$r=c_{ji}$, with $j\ge i+2$. If $r=c_{ij}$ then $rws(r') \sim rwt(r')$. If
$r=c_{ji}$ then $rwt(r') \succ rws(r')$ if and only if 
\begin{equation}
\label{kir}
\sum_{k\le i} \#_k t(r') > \sum_{k\le i} \#_k s(r'). 
\end{equation}
It is clear that $r'$ can be of type $a$ or $c$ as for such rules we have
$\#_k t(r') = \#_k s(r')$ for arbitrary $k$. If $r'=b_{ml}$ with $m>l$, then 
\begin{equation}
\label{jtrp}
\#_k t(r') = 
\begin{cases}
\#_k s(r') -1, & k= m\\
\#_k s(r') + 1, & j=m-1\\
\#_k s(r'), & \mbox{otherwise.}
\end{cases}
\end{equation}
Thus \eqref{kir} holds only if $i=m-1$, i.e. $r'=b_{i+1,l}$ with $i+1>l $. 
As $b\gg c$ we get that $s(r)wr' \succ rwt(r')$. Moreover, 
$s(r)wr'\sim t(r)wr'$ as $\#_k s(c_{ji}) = \#_k t(c_{ji})$ for all $k$. 
Thus $\ned(r,w,r')$ is decreasing in this case. 

Let us consider the case $r=b_{ji}$ for some $j>i$. Then we have $rwt(r')> rws(r')$ if
and only if 
\begin{equation*}
\left( \#_j t(r'),\dots \#_1 t(r') \right) > (\#_j s(r'),\dots, \#_1 s(r')).
\end{equation*}
This is possible only in the case $r'=b_{j+1,k}$ for some $k<j+1$. 
As $b_{j+1,j} \gg b_{jj'}$, we get that $s(r)wr'\succ rwt(r')$. Moreover
\begin{equation*}
\begin{aligned}
\#_{j+1}s(r) = \#_{j+1}(j\dots ij)=0=\#_{j+1}(j'j\dots i) =
\#_{j+1}t(r)\\
\#_{j}s(r) = \#_j (j\dots i j)  = 2 > 1 = \#_j (j'j\dots i) = \#_j t(r).
\end{aligned}
\end{equation*} 
Thus $s(r)wr'\succ t(r)wr'$ and we get that $\ned(r,w,r')$ is decreasing.

Now we will analyze the triples $(r,w,r')$ such that $t(r)wr' \succ s(r)wr'$. 
It is impossible to have $r'$ of type $a$ as the length of $t(r)w$ never exceeds
the length of $s(r)w$. 
Suppose $r'$ is of type $c$. Then $r'=c_{ij}$ or $r'=c_{ji}$ for some $j$,
$i$ such that $j>i+1$. If $r'=c_{ij}$ then $t(r)wr' \sim s(r) wr'$. Thus we have
only to consider the case $r'=c_{ji}$. In this situation $t(r)wr'\succ
s(r)wr'$ if and only if 
\begin{equation}
\label{kjr}
\sum_{k\ge j }\#_k t(r) > \sum_{k\ge j} \#_k s(r). 
\end{equation}
As for rules $r''$ of type $a$ or $c$, the inequality $\#_k t(r'') \le \#_k
s(r'')$ holds for
every $k$, we get that $r$ cannot  be of type $a$ or $c$. If $r=b_{ml}$ for some
$m>l$, then the sums in \eqref{kjr} are equal unless $j=m$, in which case the
left hand side of \eqref{kjr} is less than the right hand side of
\eqref{kjr}. This shows that $r'$ cannot have type $c$.

Suppose $r'=b_{ji}$ for some $j>i$. Then $t(r)wr' \succ s(r)wr'$ if and only if 
\begin{equation*}
\left( \#_j t(r),\dots, \#_1 t(r) \right) > \left( \#_j s(r),\dots, \#_1
s(r) \right)
\end{equation*}
with respect to the lexicographical order on $\N^j$. This is possible only in
the case $r=b_{j+1,l}$ for some $l<j+1$.  
As $b_{j+1,j} \gg b_{jj'}$, we get that $rws(r') \succ t(r)wr'$. Moreover, as 
$\#_{j+1} s(r') = \#_{j+1} t(r')$ and $\#_j s(r') > \#_j t(r')$, we have
$rws(r') \succ rwt(r')$.  This shows that $\ned(r,w,r')$ is decreasing in this
case. 
\end{proof}

Now we construct a complete set of decreasing critical e.d.s
for the ARS $\left\langle X^*, X^*\rel X^* \right\rangle$. 

Note that there are not any rules $r$, $r'\in \rel$
such that $s(r')$ is a proper subword of $s(r)$. Therefore all the cricital
pairs for $\rel$ are of the form $(ru,vr')$ for some $r$, $r'\in \rel$ and
non-empty $u$, $v\in X^*$. 

 If $r=c_{ij}$ with
$j\ge i+2$, then the following commutative diagram is decreasing and  resolving
for this critical
pair  
\begin{equation*}
\begin{gathered}
\xymatrix{
iju \ar[r]^{c_{ij}u} \ar[dd]_{vr'} & jiu\ar[d]^{c_{ji}u} \\
& iju \ar[d]^{vr'} \\
t(r) \ar@{-->}[r] & t(r).
}
\end{gathered}
\end{equation*} 
In fact, $c_{ij} \gg c_{ji}$ implies that $c_{ij}u \succ c_{ji} u $.  As
$\succeq$ is monomial, we get 
that $vr'\sim vr'$. 
Similarly, if $r'=c_{ij}$ with $j\ge i+2$, the diagram
\begin{equation}
\label{conc}
\begin{gathered}
\xymatrix{
uij \ar[r]^{uc_{ij}} \ar[dd]_{ru} & uji\ar[d]^{uc_{ji}} \\
& uij \ar[d]^{ru} \\
t(r) \ar@{-->}[r] & t(r).
}\end{gathered}
\end{equation}
is decreasing and resolves the critical pair $(ru,vc_{ij})$. 

It is left to consider the critical pairs with $r$ and $r'$ equal to one of 
$a_k$, $b_{ji}$, $j\ge i+1$, $c_{ts}$, $t\ge s+2$. 
In the diagrams below we will abbreviate $c$-paths by~$\stackrel{c^*}{\path}$. This does not create any ambiguity in view of
Proposition~\ref{cpath}. 

We first consider all the critical pairs involving at least one $a_k$. 
The diagram~\eqref{aa} is  a decreasing critical e.d. for the critical pair
$(ka_k, a_kk)$. 
For the critical pair $(a_k k'\dots jk, kb_{kj})$, we consider the diagram

\begin{equation}
\label{aB}
E(a_k,b_{kj})=\begin{gathered}
\xymatrix{
kk\dots jk \ar[r]^-{kb_{kj}} \ar[dd]_{a_k k'\dots jk} & kk'k\dots j \ar[d]^{b_ k'\dots  j} \\
& k'k k'k'\dots j \ar[d]^{k'k a_{k'} k''\dots j}\\
k\dots jk \ar[r]^-{b_{kj}} & k'k\dots j. 
}
\end{gathered}
\end{equation}
The diagram~\eqref{aB} is decreasing since $kb_{kj}$ is greater than all the other
arrows. Note, that for $j=k'$ the word $k''\dots j$ is empty and we
recover~\eqref{ab}. 

For the critical pair $(b_{kj}k, k\dots j a_k)$, we consider the diagram
\begin{equation}
\label{Ba}
E(b_{kj}, a_k) = \begin{gathered}
\xymatrix@C4em{
k\dots jkk \ar[rr]^-{k\dots ja_k} \ar[d]_{b_{kj} k} && k\dots jk\ar[d]^{b_{kj}}\\
k'k\dots j k \ar[r]^-{k'b_{kj}} &  k'k'k\dots j\ar[r]^-{a_{k'} k\dots j} & k'k\dots
j.
}
\end{gathered}
\end{equation}
The diagram~\eqref{Ba} is decreasing since $b_{kj}k$ is greater then any other
arrow. 
In the case $j=k'$, we recover the diagram~\eqref{ba}. 

All the critical pairs involving two $b$-rules are of the form $(b_{kj}k'\dots
i, k\dots jb_{ki})$. For $j=k'$, we get
\begin{equation}
\label{bB}
{}_{E(b_{kk'}, b_{ki}) =} \begin{gathered}
\xymatrix{
kk'k\dots ik \ar[rrr]^-{kk'b_{ki}} \ar[ddd]^{b_{kk'}k'\dots ik} &&& kk'k'k\dots
i \ar[d]^{ka_{k'} k\dots i}\\
&&& kk'k\dots i \ar[d]^{b_{kk'}k'\dots i} \\
&&& k'kk'k'\dots i \ar[d]^{k'ka_{k'} k''\dots i}\\
k'kk'k'\dots ik \ar@/^4ex/[r]^-{k'ka_{k'}k''\dots ik } & k'k\dots ik
\ar[r]^-{k'b_{ki}} & k'k'k\dots i\ar[r]^-{a_{k'}k\dots i} & k'k\dots i.
}
\end{gathered}
\end{equation}
The diagram~\eqref{bB} is decreasing since $b_{kk'}k'\dots ik\sim
kk'b_{ki}$ dominates all
the other arrows. Note that for $i=k'$ the subword $k''\dots i$ is empty and we
recover the diagram~\eqref{bb}. 

The diagram corresponding to  the case $j\le k''$ is
not drawn in the rectangular form for the
typographical
reasons. We also write $S$ for the word $k'''\dots j k''\dots i$. 

\begin{equation}
\label{BB}
{}_{E(b_{kj}, b_{ki})=} \begin{gathered}
 \xymatrix@R8ex@C0.2em{
& k\dots jk\dots ik \ar[ld]_{b_{kj} k'\dots ik} \ar[rd]^{k\dots
jb_{ki}} \\ 
k'k\dots jk'\dots ik \ar@{->>}[d]_{c^*}  & &  k\dots jk'k\dots i
\ar@{->>}[d]^{c^*} \\
k'k k' k'' k' k S \ar[d]_{k'k b_{k'k''} k S} && 
kk' k'' k' k k' S \ar[d]^{kb_{k'k''} kk'S } \\
k'kk'' k'k'' k S  \ar@{->>}[d]_{c^*}&&
kk''k'k'' kk' S \ar@{->>}[d]^{c^*} \\
k'k''kk'kk'' S \ar[d]_{k'k'' b_{kk'} k'' S}  && k''kk'kk'' k' S
\ar[d]^{k''b_{kk'} k'' k' S} 
\\
k'k''k'kk'k'' S  \ar[d]_{b_{k'k''} kk'k''S} && k''k'k k' k'' k' S
\ar[d]^{k'' k'k b_{k'k''}S} \\ 
k''k'k'' kk'k''  S\ar@{->>}[rr]^{c^*} && k'' k' k k'' k' k'' S
}
\end{gathered}
\end{equation}
In~\eqref{BB} all the arrows are dominated by the two top arrows 
$b_{kj} k'\dots ik \sim k\dots jb_{ki}$. For $j=k''$ and $i=k'$,
\eqref{BB} becomes \eqref{TZ}. 

Now we will consider critical pairs involving one $b$-rule and one $c$-rule of
the form $c_{kj}$ with $k\ge j+2$. 

For $(c_{kj}j'\dots ij, k b_{ji})$, $k\ge j+2$, $j\ge i+1$, 
we consider the diagram
\begin{equation}
\label{cB}
{}_{E(c_{kj}, b_{ji})} = \begin{gathered}
\xymatrix{
kj\dots ij \ar[rrr]^-{kb_{ji}} \ar[d]_{c_{kj} j'\dots ij} &&& kj'j\dots i
\ar@{->>}[d]^{c^*} \\
jkj'\dots ij \ar@{->>}[r]^{c^*} & j\dots i k j \ar[r]^{j\dots i c_{kj}} &  j\dots ij k \ar[r]^{b_{ji} k} & j'j\dots ik.
}
\end{gathered}
\end{equation}
In~\eqref{cB}, we have $kb_{ji} \sim b_{ji} k$ and $kb_{ji}$  dominates every
arrow in the vertical $c$-path. Thus we have to show that $c_{kj} j'\dots ij$
dominates every arrow in the horizontal $c$-path. The horizontal $c$-path
involves the rules generated by $c_{ks}$ with $s<j$ and at the last step the
rule $j\dots i c_{kj}$. Since, by definition, $c_{kj} \gg c_{ks}$ if $j>s$, we
see that $c_{kj} j'\dots ij$ dominates all the horizontal $c$-arrows except
probably the last one. But it is easy to see that also
\begin{equation*}
c_{kj} j'\dots ij \succ j\dots i c_{kj}. 
\end{equation*}

Now suppose $k\ge s+2$ and $k\ge j+1$. For the critical pair $(b_{kj} s, k\dots
j c_{ks})$ we will consider several cases. If $j\ge s+2$, then we can take a
diagram similar to~\eqref{cB}
\begin{equation}
\label{Bc1}
E(b_{kj},c_{ks})=\begin{gathered}
\xymatrix{
k\dots jk s \ar[r]^-{k\dots jc_{ks}} \ar[ddd]_{b_{kj} s}  & k\dots jsk
\ar@{->>}[d]^{c^*}\\ 
& ks k'\dots jk \ar[d]^{c_{ks} k'\dots jk} \\
& s k\dots j k \ar[d]^{s b_{kj}}\\
k'k\dots j s \ar@{->>}[r]^-{c^*} & s k'k\dots j. 
}
\end{gathered}
\end{equation}
In~\eqref{Bc1}, $b_{kj}s \sim sb_{kj}$ and dominates every arrow in the
horizontal $c$-path. We have to check that $k\dots j c_{ks}$ dominates all
arrows in the vertical $c$-path. This is done using that $c_{ks} \gg
c_{ts}$ for all $k'\ge t \ge j$, and that $k\dots j c_{ks} \succ c_{ks} k'\dots
jk$. 

For $j=s+1$ we consider the diagram
\begin{equation}
\label{Bc2}
E(b_{kj}, c_{kj'}) = \begin{gathered}
\xymatrix@C6em{
k\dots jkj' \ar[r]^{k\dots j c_{kj'}} \ar[d]_{b_{kj} j'} & k\dots j'k
\ar[d]^{b_{kj'}} \\
k'k\dots j' \ar@{-->}[r] & k'k\dots j'.
}
\end{gathered}
\end{equation}
The diagram~\eqref{Bc2} is decreasing since $b_{kj} j' \sim b_{kk'} k''\dots j'
\sim b_{kj'}$. 

For $j=s$, we consider the diagram
\begin{equation}
\label{Bc3}
E(b_{kj}, c_{kj}) = \begin{gathered}
\xymatrix@C6em{
k\dots jkj \ar[r]^{k\dots j c_{kj}} \ar[dd]_{b_{kj} j} & k\dots jjk
\ar[d]^{k\dots \widehat{\jmath} a_j k} \\
& k\dots jk \ar[d]^{b_{kj}} \\
k'k\dots jj \ar[r]^{k\dots \widehat{\jmath} a_j} & k'k\dots j
}
\end{gathered}
\end{equation}
In~\eqref{Bc3}, $b_{kj}j$ dominates all the arrows. 

For $k'' \ge s \ge j+1$, we take the diagram
\begin{equation}
\label{Bc4}
E(b_{kj}, c_{ks}) = \begin{gathered}
\xymatrix@C6em{
k\dots jks \ar[r]^{k\dots j c_{ks}} \ar[ddd]_{b_{kj} s} & k\dots js k
\ar@{->>}[d]^{c^*} \\
& k\dots \widehat{s} k s\dots js \ar[d]^{k \dots \widehat{s}k b_{sj}} \\
& k \dots \widehat{s}k s's\dots j \ar[d]^{b_{k\widehat{s}} s's \dots j} \\
k'k\dots \widehat{s} s \dots js \ar[r]^-{k'k \dots \widehat{s} b_{sj}} & k'k\dots
\widehat{s} s's \dots j.
}
\end{gathered}
\end{equation}
Note that for $s=k''$ the subword $k''\dots \widehat{s}$ is empty. We claim that
$b_{kj}s$ dominates all the arrows. This is obvious for all arrows except
$b_{k\widehat{s} } s'\dots j$. 
We have $b_{kj} s\sim b_{kk'} k''\dots j s$ and $b_{k\widehat{s}} s's\dots j
\sim b_{kk'} k''\dots \widehat{s} s' s \dots j$. Now for all $s+1 \le t \le k$, we
have
\begin{equation*}
\begin{aligned}
\#_t (k''\dots j s)  & =
\begin{cases}
0 ,& t=k,k'\\
1, & s+1\le t\le k''
\end{cases}
\\[3ex] & = \#_t (k''\dots \widehat{s} s's \dots j).
\end{aligned}
\end{equation*}
Further,
\begin{equation*}
\#_s (k''\dots js ) = 2 > 1 = \#_s (k''\dots \widehat{s} s's \dots j).
\end{equation*}
Thus
\begin{equation*}
b_{kj} s \succ b_{k\widehat{s}}  s's \dots j. 
\end{equation*}
It is left to consider the critical pairs involving two $c$-rules of the form
$c_{ts}$ with $t\ge s+2$. They are all of
the type $(c_{kj}i, kc_{ji})$. The diagram~\eqref{cc} gives a decreasing
convergence diagram for this pair as $c_{kj}i$ dominates all the arrows
in~\eqref{cc}. This is true as $c_{kj} \gg c_{ki} \gg c_{ji}$ and $c_{kj}i >
ic_{kj}$ for $i<j$. 

We will denote the set of pairs of paths that one obtains from
the chosen critical e.d.s
by $\ced$. 
We also write $\loops$ for the set $(p,\varnothing_w)$, were  $p\colon
w\path w$  are the loops at semi-normal elements of $\left\langle X^*, X^* \rel
X^*
\right\rangle$. By Theorem~\ref{main:presentation}, we get that
$\res_{\ced\sqcup \loops}$ is an equivalence of categories.

Now we identify pairs of paths coming from loops in the attractor of
$\left\langle X^*, X^* \rel X^* \right\rangle$. 
\begin{proposition}
The preorder $\path$ on $\left\langle X^*, X^*\rel
X^*
\right\rangle$ is well-founded. Equivalently $\attr(w) \not=\varnothing$
for every $w\in X^*$. 
\end{proposition}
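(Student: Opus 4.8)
The plan is to use two elementary facts about the system $\left\langle X^*,X^*\rel X^*\right\rangle$: the alphabet $X$ is finite, and every rewriting rule is length non-increasing. For the second point, observe that the $a$-rules $a_i=(T_iT_i,T_i)$ shorten a word by one, whereas the $b$-rules $b_{ji}$ and the $c$-rules $c_{st}$ preserve length (both sides of $b_{ji}$ have length $j-i+2$, and both sides of $c_{st}$ have length $2$). Hence $l(w')\le l(w)$ for every $(w,w')\in X^*\rel X^*$, and therefore $l(v)\le l(w)$ whenever $w\path v$.

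First I would prove directly that the preorder $\path$ on $X^*$ is well-founded. Let $w_1\path w_2\path\cdots$ be a sequence in $X^*$, so that $w_i\path w_j$ for all $i\le j$. By the previous remark each $w_i$ belongs to $\bigcup_{j=0}^{l(w_1)}X^j$, which is a finite set since $X$ is finite. By the pigeonhole principle some word $v$ equals $w_i$ for infinitely many indices $i$; list these as $i_1<i_2<\cdots$ and set $n=i_1$. For any $N\ge n$ we have $v=w_{i_1}\path w_N$ because $i_1\le N$, and, choosing an index $i_k>N$, also $w_N\path w_{i_k}=v$; hence $w_N\bidir v=w_n$. This is precisely the stabilization condition defining well-foundedness of $\path$.

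The reformulation in terms of attractors follows at once, and is the form invoked in the sequel: the set $S=\{\,v\in X^*\mid w\path v\,\}$ is finite (by the first paragraph) and closed under one-step rewriting (if $v\in S$ and $v\to v'$ then $w\path v'$), so if $b$ is chosen in a strongly connected component of the finite digraph $\left\langle S,X^*\rel X^*\right\rangle$ from which no arrow leaves --- such a component exists since the condensation of a finite digraph has a sink --- then every $v$ with $b\path v$ lies in $S$, hence in that component, hence satisfies $v\path b$; thus $b$ is semi-normal and $w\path b$, so $b\in\attr(w)$.

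I do not expect a real obstacle here. The only point worth stressing is that one should \emph{not} try to prove termination of $\left\langle X^*,X^*\rel X^*\right\rangle$: the system is genuinely non-terminating, since the $c$-rules produce cycles $T_sT_t\to T_tT_s\to T_sT_t$. Well-foundedness of $\path$ is a strictly weaker property that tolerates such cycles, and it holds simply because every reduction starting from $w$ stays inside the finite set of words of length at most $l(w)$.
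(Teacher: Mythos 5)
Your proof is correct and follows essentially the same route as the paper: rules are length non-increasing, so any $\path$-chain stays in the finite set of words of length at most $l(w_1)$, and the pigeonhole principle yields a recurrent word to which all later terms are $\bidir$-equivalent. Your extra paragraph deriving $\attr(w)\neq\varnothing$ via a sink strongly connected component is a harmless elaboration of a fact the paper simply invokes from Section~3.
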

\begin{proof}
Suppose 
\begin{equation}
\label{seq}
w_0 \to w_1 \to \dots \to w_k \to \dots
\end{equation}
is an infinite sequence in $\left\langle X^*, X^*\rel
X^*
\right\rangle$. Then we have 
\begin{equation*}
l(w_0) \ge l(w_1) \ge \dots \ge l(w_k) \ge \dots
\end{equation*}
Since the set of words of length not greater than $l=l(w_0)$ is finite, we see
that there is a word $w$ that appears in~\eqref{seq} infinitely many times. Let
$w_k = w$ be the first appearance of $w$ in~\eqref{seq}. Then for all $m>k$,
there is $n > m$ such that $w_n = w$. Thus we get that for all $m>k$ there are
paths $w = w_k \path w_m$ and $w_m \path w_n = w$. This shows that for every
$m>k$, we have $w \bidir w_m$, that is $\path$ is a  well-founded preorder.
\end{proof}
\begin{proposition}
\label{attractor}
Let $w$ be a semi-normal element in $\left\langle X^*, X^*\rel
X^*
\right\rangle$ and $p\colon w\path w$ a path in $\left\langle X^*, X^*\rel
X^*
\right\rangle$. Then $p$ is a $c$-path.
\end{proposition}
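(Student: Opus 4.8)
The plan is to exhibit two quantities that are monotone along any reduction path and are preserved \emph{only} by $c$-steps, forcing a loop $w\path w$ to consist entirely of $c$-steps. First I would note that no rule in $\rel$ increases the length $l$: the rules $a_i$ strictly decrease $l$ by one, while the rules $b_{ji}$ and $c_{st}$ preserve $l$ (for $b_{ji}$ one checks that the two sides $j\dots ij$ and $j'jj'\dots i$ both have length $j-i+2$). Hence along $p\colon w=w_0\to w_1\to\dots\to w_m=w$ the sequence $l(w_0)\ge l(w_1)\ge\dots\ge l(w_m)$ is forced to be constant, so no step of $p$ can be an application of an $a$-rule. Thus every step of $p$ uses a rule of type $b$ or $c$.

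Next I would introduce the \emph{height} $N(w)=\sum_{k} k\,\#_k w$, the sum over the letters of $w$ of their indices. A $c$-step $X^*c_{st}X^*$ merely transposes two adjacent letters, so it leaves the multiset of letters of the word, and in particular $N$, unchanged. A $b$-step $X^*b_{ji}X^*$ replaces the factor $j\dots ij$ by $j'jj'\dots i$; comparing letter multisets shows this decreases $\#_j$ by one, increases $\#_{j-1}$ by one, and alters nothing else (uniformly in $i$, including the braid case $j=i+1$). Hence a $b$-step decreases $N$ by exactly $1$. Along $p$ the height is therefore non-increasing and strictly decreasing at each $b$-step; since $N(w_0)=N(w_m)$, the path $p$ contains no $b$-step. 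Consequently every step of $p$ is a $c$-step, i.e. $p$ is a $c$-path.

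The only place requiring any care is the multiset bookkeeping for the $b$-rules: one must keep in mind the explicit form $b_{ji}=(j\dots ij,\ j'jj'\dots i)$ introduced above, verify the equality of lengths of the two sides (so that the length argument in the first step is genuine and not circular), and confirm the net effect $-1$ on the number of occurrences of $j$ and $+1$ on that of $j-1$. Once this is settled the conclusion is immediate. It is worth remarking that the semi-normality hypothesis on $w$ is not actually used: every loop $p\colon w\path w$ in $\left\langle X^*, X^*\rel X^* \right\rangle$ is a $c$-path, and this is exactly what feeds into the verification that $\tau_p=1_{F_w}$ via Proposition~\ref{cpath}.
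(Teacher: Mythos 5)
Your proof is correct and follows essentially the same strategy as the paper: exhibit a quantity that is weakly monotone under every rule of $\rel$ and strictly decreases precisely at the non-$c$-steps, so that a loop can contain only $c$-steps. The paper packages this into a single lexicographic invariant $u\mapsto (l(u),\#_n(u),\dots,\#_2(u))$, whereas you split it into the length followed by the weighted sum $\sum_k k\,\#_k$; the bookkeeping for $b_{ji}$ ($\#_j$ down by one, $\#_{j-1}$ up by one, length preserved) is the same in both, and your observation that semi-normality of $w$ is not needed is also consistent with the paper's argument.
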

\begin{proof}
 Define the map
\begin{equation*}
\begin{aligned}
\underline{l} \colon X^* & \to \N^n\\
u & \mapsto (l(u), \#_n(u), \#_{n-1}(u), \dots, \#_2(u)). 
\end{aligned}
\end{equation*}
We will write $u \ge v$ if $\underline{l}(u) \ge \underline{l}(v)$ with respect to
the lexicographical order on $\N^n$. If $u\ge v$ and $v\ge u$, then we write
$u \equiv v$. 
It is obvious that $(u,v) \in X^* \rel X^*$ implies that $u\ge v$ and $u\equiv
v$ if and only if $(u,v) = w'c_{ij}w''$ for some $w'$, $w''\in X^*$. Thus
$p$ is a path in $\left\langle X^*, X^*c
X^*
\right\rangle$. 
\end{proof}
Let $\loops'$ be a subset of $\loops$ consisting of \eqref{loops}.
Combining Proposition~\ref{cpath} and Proposition~\ref{attractor}, we get
\begin{corollary}
The functor $\res_{\ced \sqcup \loops'}$ is an equivalence of categories. 
\end{corollary}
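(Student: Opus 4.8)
The plan is to bootstrap from the equivalence $\res_{\ced\sqcup\loops}$ already established via Theorem~\ref{main:presentation}, and to show that shrinking $\loops$ down to $\loops'$ does not enlarge the full subcategory $[X,\rel,\ced\sqcup\loops';\cat]$ of $[X,\rel;\cat]$ beyond $[X,\rel,\ced\sqcup\loops;\cat]$. First I would record that $\loops'\subseteq\loops$: every length-two word $st$ with $|s-t|\ge 2$ is semi-normal in $\left\langle X^*,X^*\rel X^*\right\rangle$, since the only rewriting steps available from $st$ are $st\xrightarrow{c_{st}}ts$ and then $ts\xrightarrow{c_{ts}}st$, so $\attr(st)=\{st,ts\}$ and both words reduce back to $st$; hence the pairs coming from~\eqref{loops} are genuine attractor loops and lie in $\loops$. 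This makes the inclusion $[X,\rel,\ced\sqcup\loops;\cat]\subseteq[X,\rel,\ced\sqcup\loops';\cat]$ automatic, and reduces the corollary to the opposite inclusion.

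For the opposite inclusion I would fix an arbitrary $(F,\tau)\in[X,\rel,\ced\sqcup\loops';\cat]$, a semi-normal $b$, and a path $p\colon b\path b$, and prove $\tau_p=1_{F_b}$. The two ingredients are already in place: by Proposition~\ref{attractor} the path $p$ is a $c$-path, and the empty path $\varnothing_b$ is vacuously a $c$-path from $b$ to $b$ with $\tau_{\varnothing_b}=1_{F_b}$. So it would suffice to apply Proposition~\ref{cpath} to the two $c$-paths $p$ and $\varnothing_b$ to obtain $\tau_p=\tau_{\varnothing_b}=1_{F_b}$, which says exactly that $(F,\tau)$ respects all the pairs in $\loops$.

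The only point that needs care — and the main obstacle — is that Proposition~\ref{cpath} is stated for objects of $[X,\rel'',\paths;\cat]$, whereas our object lives in $[X,\rel,\ced\sqcup\loops';\cat]$. Here I would observe that its proof uses nothing about $(F,\tau)$ beyond the facts that each $\tau_r$ is an isomorphism and that $\tau$ respects the relations~\eqref{loops} and~\eqref{cc}: it rewrites $\tau_{c_{ij}}$ as $\tau_{c_{ji}}^{-1}$ using~\eqref{loops}, thereby turning any $c$-path into a zigzag in the terminating, locally confluent ARS $\left\langle X^*,X^*\relc X^*\right\rangle$ whose critical e.d.s are precisely the instances of~\eqref{cc}, and then concludes via Example~\ref{terminating}. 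Since $\loops'$ is obtained precisely from~\eqref{loops}, while~\eqref{cc} occurs among the chosen critical e.d.s and hence its pairs lie in $\ced$, every object of $[X,\rel,\ced\sqcup\loops';\cat]$ satisfies these hypotheses, so the conclusion of Proposition~\ref{cpath} does apply to it.

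Putting this together gives $(F,\tau)\in[X,\rel,\ced\sqcup\loops;\cat]$, hence $[X,\rel,\ced\sqcup\loops';\cat]=[X,\rel,\ced\sqcup\loops;\cat]$ as subcategories of $[X,\rel;\cat]$; therefore $\res_{\ced\sqcup\loops'}$ is literally the same functor as $\res_{\ced\sqcup\loops}$, which is an equivalence of categories. I do not expect any heavy computation: all the real work has been carried out in Propositions~\ref{attractor} and~\ref{cpath} and in Theorem~\ref{main:presentation}, and what remains is the bookkeeping described above.
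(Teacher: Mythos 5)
Your proof is correct and follows essentially the same route as the paper, which derives the corollary by combining Proposition~\ref{attractor} (attractor loops are $c$-paths) with Proposition~\ref{cpath} (parallel $c$-paths induce equal transformations) to reduce the $\loops$ condition to the $\loops'$ condition. Your additional care in checking that the proof of Proposition~\ref{cpath} only uses the pairs coming from~\eqref{loops} and~\eqref{cc}, both of which are available in $\ced\sqcup\loops'$, is a detail the paper leaves implicit but is exactly the right point to verify.
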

Next we are going to relate the categories $[X,\rel, \ced \cup \loops'; \cat]$
and $[X,\rel'',\paths; \cat]$, where $\paths$ was defined on
page~\pageref{loops}. 

We have two functors $\res \colon [X,\rel; \cat] \to [X,\rel'';\cat]$ and
$\ext \colon [X,\rel'';\cat]\to [X,\rel; \cat]$. The first functor is defined by $(F,\tau)\to
(F, \mathrm{res}(\tau))$, where $\mathrm{res}(\tau)$ is the restriction of
$\tau$ to $\rel''$.
The functor $\ext \colon [X,\rel'';\cat]\to [X,\rel;\cat]$
is defined by $\ext(F,\tau) = (F,\tilde{\tau})$
where
\begin{equation*}
\begin{aligned}
\tilde{\tau}(a_i) & = \tau(a_i) ,&  \tilde{\tau}(c_{st}) & = \tau(c_{st}), &
\tilde{\tau}(b_{kk'}) = \tau(b_k)
\end{aligned}
\end{equation*}
and $\tilde{\tau}(b_{kj})$ for $j\le k''$ is computed  recursively
using the relation
\begin{equation*}
\tilde{\tau}(b_{kj}) = (\tilde{\tau}(b_{k,j+1})F_j) \circ (F_k\dots F_{j+1}
\tau(c_{jk})).
\end{equation*}

Let $\zed$ be the set of pairs of paths in $\left\langle X^*, X^*\rel X^*
\right\rangle$obtained from~\eqref{Bc2}. Then 
it is easy to see that $\res$ and $\ext$ induce mutually inverse equivalences of
categories
\begin{equation*}
\res \colon [X,\rel,\zed; \cat] \rightleftarrows [X,\rel'';\cat]\colon \ext
\end{equation*}
It is also clear that
if $(F,\tau) \in [X,r,\ced \sqcup \loops'; \cat]$ then $\res(F,\tau) \in
[X,\rel'', \paths;\cat]$ since the images of the pairs in $\paths$ under
$\ext$ appear among the pairs
$\ced \sqcup \loops' $. 

Now we are going to show that for every $(F,\tau) \in [X,\rel'', \paths; \cat]$,
we get $\ext (F,\tau) \in [X,\rel, \ced\sqcup \loops';\cat]$. Let us write
$(F,\tilde{\tau})$ 
for $\ext(F,\tau)$. We have to show that every diagram in $\ced \sqcup \loops'$ is mapped into a
commutative diagram under $f_{F,\tilde{\tau}}$.  For $\loops'$ this is obvious,
since $\loops' \subset \paths$ and $(F,\tau) \in [X,\rel'',\paths;\cat]$. 
Now, we have to check that all the diagrams (\ref{conc}-\ref{Bc4}) become
commutative under $f_{F,\tilde{\tau}}$. 

For~\eqref{conc} this is obvious, since $\tau(c_{st}) \tau(c_{ts}) = \id$ for
all $|s-t| \ge 2$.  
The commutativity of the other diagrams follows by an induction argument and the  
patching diagrams listed bellow. 
We label natural e.d.s by $\ned$. 

As we already noted before~\eqref{aB} for $j=k'$ becomes~\eqref{ab}.
Now, for $j\le k''$, we have
\begin{equation}
\label{aBp}
\begin{gathered}
\xymatrix{ 
kk\dots j k \ar[rr]^-{ kk\dots \widehat{\jmath} c_{jk} }  \ar[dd]_{a_k k'\dots jk } && kk\dots \widehat{\jmath}kj 
\ar[dd]^{a_k k'\dots \widehat{\jmath}kj} \ar[rr]^-{k b_{k\widehat{\jmath}}j} && kk'k\dots \widehat{\jmath}j\ar@{->>}[dd] \\
& \ned && E(a_k,b_{k\widehat{\jmath}})j \\
k\dots jk \ar[rr]^{k\dots \widehat{\jmath}c_{jk}} && k \dots \widehat{\jmath}kj \ar@{->>}[rr] && k'k\dots j.
}
\end{gathered}
\end{equation}

Further~\eqref{Ba} for $j=k'$ is~\eqref{ba}, and for $j\le k''$, we have the
diagram
\begin{equation}
\label{Bap}
\begin{gathered}
\xymatrix{
k\dots jkk\ar[dd]_{k\dots \widehat{\jmath} c_{jk} k} \ar[rrrr]^-{k\dots j a_k} &&&& k\dots jk  \ar[dd]^{k\dots \widehat{\jmath} c_{jk}}\\
&& \eqref{ac}  \\
k\dots \widehat{\jmath} kjk \ar[rr]^-{k\dots \widehat{\jmath} kc_{jk}}
\ar[dd]_{b_{k\widehat{\jmath}}jk}
 && k\dots \widehat{\jmath} kkj \ar[rr]^{k\dots \widehat{\jmath}  a_kj} \ar[dd]_{b_{k\widehat{\jmath}}kj}&& k\dots \widehat{\jmath} kj  \ar@{->>}[dd]\\ 
& \ned && E(b_{k\widehat{\jmath}},a_k) j
\\
k'k \dots j k \ar[rr]^{k'k\dots \widehat{\jmath} c_{jk}} && k'k \dots \widehat{\jmath} kj \ar@{->>}[rr] &&
k'k\dots \widehat{\jmath}j.
}
\end{gathered}
\end{equation}
The diagram~\eqref{bB} for $i=k'$ is \eqref{bb}. For $i\le k''$, we use the
diagram
\begin{equation}
\label{bbp}
\begin{gathered}
\xymatrix@C2em{
kk'k\dots ik\ar[dd]_{b_{kk'}k'\dots ik}  \ar[rr]^{kk'k\dots \widehat{\imath} c_{ik}} && kk'k \dots \widehat{\imath}ki
\ar[rr]^-{kk'b_{k\widehat{\imath}}}\ar[dd]_{b_{kk'} k'\dots \widehat{\imath}ki}  && kk'k' k \dots \widehat{\imath}i
\ar@{->>}[dd] \\
& \ned && E(b_{kk'}, b_{k\widehat{\imath}}) i \\
k'kk'k' \dots ik \ar[rr]^-{k'kk'k' \dots \widehat{\imath} c_{ik}} && k'kk'k' \dots \widehat{\imath}k i
\ar@{->>}[rr] && k'k\dots \widehat{\imath}i.
}
\end{gathered}
\end{equation}

The diagram~\eqref{BB} for $j=k''$ and $i=k'$ is~\eqref{TZ}. Let us first
consider the case $i=k'$. Then we use the patching diagram
\begin{equation}
\label{BBp1}
\begin{gathered}
\xymatrix@C0em{
k\dots j kk'k\ar[dd]_{k\dots \widehat{\jmath} c_{jk} k'k}  \ar[rrrr]^-{k\dots j b_{kk'}} &&&&
k\dots jk'kk' \ar@{->>}[dd]^{c^*} \\
&& \eqref{bc} \\
k \dots \widehat{\jmath} kjk'k \ar[rr]^{c^*} \ar[dd]_{b_{k\widehat{\jmath}} jk'k } && k\dots \widehat{\jmath} kk' kj
\ar[dd]_{b_{k\widehat{\jmath}}k'kj} 
\ar[rr]^{k\dots \widehat{\jmath} b_k j} && k \dots \widehat{\jmath} k'kk' j \ar@{->>}[dd] \\
& \ned && E(b_{k\widehat{\jmath}},b_{kk'}) j\\
k'k\dots \widehat{\jmath}j k' k  \ar[rr]^{c^*} && k'k \dots \widehat{\jmath} k'k j \ar@{->>}[rr] && k'' k' k
k'' k' k'' \dots \widehat{\jmath} j.
}
\end{gathered}
\end{equation}
For $i\le k''$, we use 
\begin{equation}
\label{BBp2}
\begin{gathered}
\xymatrix@C0em{
k\dots j k\dots i k \ar[dd]_{b_{kj} k'\dots ik}  \ar@/^3ex/[rr]^-{k\dots j
k\dots \widehat{\imath} c_{ik}} && k\dots j k\dots \widehat{\imath} ki \ar[dd]_(0.7){b_{kj} k' \dots \widehat{\imath}ki}
\ar[rr]^{ k\dots j b_{k\widehat{\imath}}i} && k\dots j k' k \dots \widehat{\imath} i \ar@{->>}[dd] \\
& \ned && E(b_{kj}, b_{k\widehat{\imath}}) i \\
k'k\dots jk'\dots ik \ar@/_4ex/[rr]_-{k'k\dots j k' \dots \widehat{\imath}c_{ik}} && 
k'k \dots jk' \dots \widehat{\imath}ki \ar@{->>} [rr] && k''k'kk''k'k''\dots jk''\dots \widehat{\imath}i.
}
\end{gathered}
\end{equation}

The diagram~\eqref{cB} for $i=j'$ is \eqref{bc}. For $i\le j''$, we consider the
diagram
\begin{equation}
\label{cBp}
\begin{gathered}
\xymatrix@C0em{
kj\dots ij \ar@/^3ex/[rr]^{kj\dots vc_{ij}}\ar[dd]_{c_{kj} j'\dots ij} &&
kj\dots v ji \ar[rr]^{kb_{jv}i} \ar[dd]^(0.7){c_{kj} j' \dots vji} &&
kj'j\dots vi \ar@{->>}[dd]\\
& \ned && E(c_{kj}, b_{jv}) i \\
jkj' \dots ij \ar@/_4ex/[rr]_-{jkj' \dots v c_{ij}} && jkj' \dots v ji \ar@{->>}[rr] &&
j'j \dots vk i.
}
\end{gathered}
\end{equation}
The commutativity of $f_{F,\tilde\tau}(E(b_{kk'}, c_{ks}))$ for $s\le k'''$ follows
from the commutativity of the diagram obtained by application  $f_{F,\tau}$ to~\eqref{bc}.
Now, we show that $f_{F,\tilde\tau}(E(b_{kj}, c_{ks}))$ is commutative for
$s\le k'''$ and $j\le k''$ by induction on $j$, using the diagram
\begin{equation}
\label{Bc1p}
\begin{gathered}
\xymatrix@C0em{
\ar[dd]_{k\dots \widehat{\jmath} c_{jk} s} k\dots j ks \ar[rrrr]^-{k\dots j c_{ks}} &&&& k\dots j sk\ar[dd]^{c^*} \\
 && \eqref{cc}\\
\ar[dd]_{b_{k\widehat{\jmath}}js} k\dots \widehat{\jmath} kjs \ar@/^3ex/[rr]^{k\dots \widehat{\jmath} kc_{js}}&&  k\dots \widehat{\jmath} ks j
\ar[rr]^{k\dots \widehat{\jmath} c_{ks} j} \ar[dd]_(0.7){b_{k\widehat{\jmath}}sj}  && \ar@{->>}[dd]  k \dots \widehat{\jmath}sk j \\
& \ned && E(b_{k\widehat{\jmath}},c_{ks})j \\
k'k\dots js \ar@/_4ex/[rr]_{k'k\dots \widehat{\jmath} c_{js}} && k'k\dots \widehat{\jmath} sj \ar@{->>}[rr] && sk'k
\dots \widehat{\jmath}j.
}
\end{gathered}
\end{equation}
The commutativity of the diagrams $f_{F,\tilde\tau}(E(b_{kj}, c_{kj'})) $ follows from the definition
of $\tilde\tau$. 
Next we check that $f_{F,\tilde\tau}(E(b_{kj}, c_{kj}))$ is commutative. 
\begin{equation}
\label{Bc3p2}
\begin{gathered}
\xymatrix@C1.2em{
\ar@/_3ex/[dd]_{k\dots v c_{j k} j}^{{\eqref{loops}}} k \dots j k j
\ar[rrrr]^{k\dots j c_{kj}} &&&& k\dots jjk  \ar[dd]^{ k\dots v a_j k}\\ && \eqref{ac}
\\
 \ar@/_3ex/[uu]_(0.3){k\dots v c_{kj}
j}
k \dots v kjj \ar[rr]_{k\dots v ka_j} \ar[dd]_{b_{kv}jj}  && k \dots v kj \ar[rr]^{k\dots v
c_{kj}}\ar[dd]^{b_{kv} j} && k\dots v jk \ar[dd]^{b_{kj}} \\ & \ned && E(b_{kv}, c_{kj}) 
\\
k'k\dots v kjj \ar[rr]^{k'k\dots v k a_j}  && k'k\dots v j  \ar@{-->}[rr] &&
k'k\dots v_j
}
\end{gathered}
\end{equation}

To prove the commutativity of $f_{F,\tilde{\tau}}(E(b_{kj},c_{ks}))$, one notices
that commutativity of  $f_{F,\tilde\tau} (E(b_{kj}, c_{kj'})$ for all $j\le k'$, implies that 
for any $k''\ge s \ge j+1$, the following diagram commutes uppon
application of $f_{F,\tilde\tau}$ to it:
\begin{equation}
\label{asdf}
\begin{gathered}
\xymatrix{
k\dots j k s \ar[r] \ar[r]^-{c^*} \ar[d]_{b_{k j} s} & k\dots \widehat{s}k s
\dots j s \ar[d]^{b_{k\widehat{s}} s \dots js} \\
k'k \dots j s \ar@{-->}[r] & k' k \dots j s .
}
\end{gathered}
\end{equation}
Further one uses that $\tilde\tau (c_{sk})\tilde\tau ( c_{ks}) = \id$ and the
natural e.d. with $r=b_{k\widehat{s}}$, $r' = b_{sj}$, and $w = \varnothing$. 

This finishes the proof that the diagrams (\ref{aa}-\ref{TZ}) give a coherent
presentation for the $0$-Hecke monoid $\rs{n+1}$.

\bibliography{preaction}
\bibliographystyle{amsplain}

\end{document}